\def\aa{{\mathcal A}}
\def\cc{{\mathcal C}}
\def\ee{{\mathcal E}}
\def\mm{{\mathcal M}}
\def\rr{R}
\def\ffi{\varphi}
\def\dst{\displaystyle}
\newcommand{\HUP}{Heisenberg uni\-que\-ness pair}
\DeclareMathOperator{\vect}{span}
\def\N{{\mathbb{N}}}
\def\Q{{\mathbb{Q}}}
\def\R{{\mathbb{R}}}
\def\S{{\mathbb{S}}}
\def\Z{{\mathbb{Z}}}
\newcommand{\scal}[1]{{\left\langle{#1}\right\rangle}}
\newtheorem{lemma}{Lemma}[section]
\newtheorem{proposition}[lemma]{Proposition}
\newtheorem{theorem}[lemma]{Theorem}
\newtheorem{corollary}[lemma]{Corollary}
\theoremstyle{definition}
\newtheorem{remark}{Remark}
\newtheorem{example}{Example}
\newtheorem{definition}{Definition}
\date{\today}
\begin{document}
\title[The Cramer-Wold Theorem on Quadratic Surfaces]{The Cramer-Wold Theorem on Quadratic Surfaces and Heisenberg
  Uniqueness Pairs}

\author{Karlheinz Gr\"ochenig}
\address{Faculty of Mathematics \\
University of Vienna \\
Nordbergstrasse 15 \\
A-1090 Vienna, Austria}
\email{karlheinz.groechenig@univie.ac.at}

\author{Philippe Jaming}
\address{Univ. Bordeaux, IMB, UMR 5251, F-33400 Talence, France.
CNRS, IMB, UMR 5251, F-33400 Talence, France.}
\email{Philippe.Jaming@math.u-bordeaux.fr}

% \thanks{K.\ G.\ was
%   supported in part by the  project P26273 - N25  of the
% Austrian Science Fund (FWF)}

\begin{abstract}
Two measurable sets $S, \Lambda \subseteq \R ^d$ form a \HUP ,
if every bounded measure $\mu $ with support in $S$ whose Fourier
transform vanishes on $\Lambda $ must be zero. We show that a
quadratic hypersurface and the union of two  hyperplanes in general position form a
\HUP\ in $\R ^d$. As a corollary we obtain a new, surprising version
of the classical Cram\'er-Wold theorem:  a bounded measure
supported on a quadratic hypersurface is uniquely determined by its
projections onto two generic hyperplanes (whereas an arbitrary measure
requires the knowledge of a dense set of projections). 
We also give an  application to the  unique continuation of
eigenfunctions of second-order  PDEs with constant coefficients.
\end{abstract}

\subjclass[2010]{42A68;42C20}

\keywords{Uncertainty principle; annihilating pair; Heisenberg
 uniqueness  pair; Cram\'er-Wold Theorem}

\maketitle

%\tableofcontents

\section{Introduction}

The notion of a \HUP\  by Hedenmalm and Montes-Rodr\'igez~\cite{HMR}
introduced a new facet to the investigation of Fourier transform pairs $(\mu ,
\widehat{\mu })$ for a bounded Borel measure $\mu $ and its Fourier
transform (or characteristic function) 
$$
\widehat{\mu}(\xi)=\int_{\R ^d}
e^{i\scal{x,\xi}}\,\mbox{d}\mu(x),\qquad\xi\in\R^d \, .
$$
% Fourier transform pairs are the realm of the uncertainty principle.
In general a version of the  uncertainty principle balances the size (of the support or
of some quantity) of $\mu $ with the size of $\widehat{\mu }$, so that
both cannot be too small. By contrast, \HUP s balance the size of the
support of a measure $\mu $ with the size of the zero set of
$\widehat{\mu }$. A particular aspect is that both  the support of
$\mu$  and the zero
set of $\widehat{\mu }$ may be singular sets.

\subsection{Heisenberg Uniqueness Pairs}\ 

To be precise, let $S,\Lambda \subseteq \R ^d$ be two measurable sets and
let $\mm(S)$ denote  the set of \emph{finite} signed Borel measures supported in $S$.
The following definition was first introduced in
\cite{HMR} in a sightly more restrictive form.

\begin{definition} 
The pair $(S,\Lambda) \subset \R ^d \times \R ^d$ is said to be a
\emph{Heisenberg uniqueness pair},  %(HUP)
if the only measure $\mu\in\mm(S)$ such that $\widehat{\mu}=0$ on $\Lambda$ is the measure $\mu=0$.

For a subset   $\cc\subset\mm(S)$ of measures,   $(S,\Lambda)$ is said to be a \emph{$\cc$-Heisenberg uniqueness pair}
if the only measure $\mu\in\cc$ such that $\widehat{\mu}=0$ on $\Lambda$ is the measure $\mu=0$.
\end{definition}

Equivalently, if two finite measures supported on $S$ have characteristic functions agreeing on $\Lambda$, then
they are equal.

If $S$ is a smooth manifold, we write $\aa\cc(S)$ for the set of finite measures that are absolutely continuous with respect 
to the surface measure on $S$. The original concept in~\cite{HMR} was that of
an  $\aa\cc(S)$-Heisenberg uniqueness pair
and  so far was almost exclusively  studied in dimension $d=2$  when  $S$ is a smooth curve.

Since a general characterization of \HUP s is out of reach, research
so far has focussed on the investigation of specific examples, for instance
$S$ being a hyperbola~\cite{HMR}, a
circle~\cite{Le,Sj1}, a parabola~\cite{Sj2}, and $\Lambda $ a set of lines
or a discrete subset thereof, some cases with three parallel  lines are treated
in~\cite{Ba},  the case of both $S$ and $\Lambda $
being circles or spheres is treated in \cite{Le,Sj1,Sri}.

In \cite{JK} most of these examples were unified and extended by means
of a new technique. The question of  whether $(S,\ell_1\cup\ell_2)$ is a
$\aa\cc(S)$-Heisenberg uniqueness pair  can then be answered by
studying  a certain dynamical system on $S$
defined by two (distinct) lines $\Lambda = \ell _1 \cup \ell _2$.\footnote{Througout the paper, two lines will always mean two {\em distinct} lines.}
With this technique, one can derive a complete characterization in the
case of conic sections  $S=\{(x,y)\in\R^2\,:
ax^2+bxy+cy^2+dx+ey=f\} \subseteq \mathbb{R}^2$ with $a,b,c,d,e,f\in\R$ not all $0$ and two
lines  $\Lambda = \ell _1 \cup \ell _2$. Before a description of these
results, we make a convenient reduction. 
As already observed in \cite{HMR}, the notion of Heisenberg uniqueness pairs
is invariant with respect to affine linear transformations, namely: 
% inherits the invariance properties of the Fourier transform with
% respect to 
% affine transformations, namely:
\begin{itemize}
\item[{\em [Inv 1]}] Fix $x, \xi \in\R^d$. Then
  $\bigl(S,\Lambda\bigr)$ is a \HUP , if and only if 
$\bigl(S-x,\Lambda-\xi \bigr)$ is a \HUP .

\item[{\em [Inv 2]}] Fix $T$ a linear invertible transformation
  $\R^d\to\R^d$ and denote by $T^*$ its adjoint. 
Then $\bigl(S,\Lambda\bigr)$ is a \HUP ,  if and only if $\bigl(T^{-1}(S),T^*(\Lambda)\bigr)$
is a \HUP .
\end{itemize}

According to the invariance properties {\sl [Inv1-Inv2]} it is enough to consider the
following cases for the analysis of conic sections in $\R ^2$. 

% The focus of \cite{JK} was on $\Lambda$ being a set of two distinct lines through the origin $\Lambda=\ell_1\cup\ell_2$.
%  Using that technique, many examples have been given in \cite{JK}.
% In particular, one can give a complete characterization in the case $\Gamma=\{(x,y)\in\R^2\,:
% ax^2+bxy+cy^2+dx+ey=f\}$ with $a,b,c,d,e,f\in\R$ not all $0$. 
% According to the invariance properties {\sl [Inv1-Inv2]} it is enough to consider the
% following cases. 

\begin{enumerate}
\renewcommand{\theenumi}{\roman{enumi}}
\item {\bf A single line.} If $\Gamma=\{(x,y)\in\R^2\,: y=0\}$ and
  $\ell_1,\ell_2$ are two arbitrary  lines through $(0,0)$, then
$(\Gamma,\ell_1\cup\ell_2)$ is a Heisenberg uniqueness pair. Actually
$(\Gamma,\ell_1)$ is already a Heisenberg uniqueness pair 
unless $\ell_1$ is  the line orthogonal to $\Gamma$~\cite{HMR}.

\item {\bf The parabola.} If $\Gamma=\{(x,y)\in\R^2\,:\ y=x^2\}$ and
  $\ell_1,\ell_2$ are  two arbitrary  lines through $(0,0)$, then
$(\Gamma,\ell_1\cup\ell_2)$ is a Heisenberg uniqueness pair~\cite{Le,Sj2}.

\item {\bf The circle.} If $\Gamma=\{(x,y)\in\R^2\,:\ x^2+y^2=1\}$ and
  $\ell_1,\ell_2$ are  two arbitrary  lines through $(0,0)$
which intersect with an angle $\theta\notin\pi\Q$, then
$(\Gamma,\ell_1\cup\ell_2)$ is a Heisenberg uniqueness pair~\cite{Le,Sj1}. \footnote{Actually, in \cite{JK}, it is shown that
$(\Gamma,\ell_1\cup\ell_2)$ is an $\aa\cc(\Gamma)$-Heisenberg uniqueness pairs. We will give a slightly simpler proof
below that allows to extend the result.}

\item {\bf The hyperbola.} If $\Gamma=\{(x,y)\in\R^2\,:\ xy=1\}$
  and $\ell_1,\ell_2$ are two lines through $(0,0)$.  If
  $\ell_1=\R(a,b)$ and $\ell_2\not=\R(-a,b)$, 
	then
$(\Gamma,\ell_1\cup\ell_2)$ is a Heisenberg uniqueness pair~\cite{HMR}.

% \item {\bf The hyperbola.} If $\Gamma=\{(x,y)\in\R^2\,:\ xy=1\}$
%   and $\ell_1,\ell_2$ are two lines through $(0,0)$. Assume that if $\ell_1=\R(a,b)$ then $\ell_2\not=\R(-a,b)$,
% that is	$\ell_1,\ell_2$ are not the symmetric with respect to the axes of one an other, 
% 	then
% $(\Gamma,\ell_1\cup\ell_2)$ is a Heisenberg uniqueness pair~\cite{HMR}.
\end{enumerate}

% Note that the parabola and the circle cases were proved previously by P. Sj\"olin \cite{Sj1,Sj2} and N. Lev \cite{Le}.
Further degenerate quadratic curves which where not discussed in
\cite{JK}, but for which the 
techniques of \cite{JK} apply are the following. 
\begin{enumerate}
\renewcommand{\theenumi}{\roman{enumi}}
\item[(v)] {\bf The cone $\Gamma=\{(x,y)\in\R^2\,:\ xy=0\}$}.  If
  $\ell_1=\R(a,b)$ and $\ell_2\not=\R(-a,b)$, then
$(\Gamma,\ell_1\cup\ell_2)$ is a Heisenberg uniqueness pair.

% \item[(v)] {\bf The cone $\Gamma=\{(x,y)\in\R^2\,:\ xy=0\}$}. If
%   $\ell_1,\ell_2$ are two lines through $(0,0)$, that are not
%   symmetric  with respect to the axes of one another,	then
% $(\Gamma,\ell_1\cup\ell_2)$ is a Heisenberg uniqueness pair.

\item[(vi)] {\bf Two parallel lines  $\Gamma=\{(x,y)\in\R^2\,:\
    x^2=a^2\}$}. If $\ell_1,\ell_2$ are two arbitrary lines through $(0,0)$
except the $y$ axis, $(\Gamma,\ell_1\cup\ell_2)$ is a Heisenberg uniqueness pair.
\end{enumerate}

By  applying the invariance properties of Heisenberg uniqueness pairs,
we may summarize these results as follows.
% see that the following theorem has been established 
% previously. 

\begin{theorem}\label{th:dim2}
Let $Q$ be a quadratic form on $\R^2$, $v\in\R^2$, $\rho\in\R$, and
$S=\{(x,y)\in\R^2\,:\ Q(x,y)+2\scal{v,(x,y)}=\rho\}$. Then there
exists an exceptional  set $\ee=\ee(Q,v,\rho)$
of pairs of distinct directions, such that when $\ell_1,\ell_2$ are
two lines with directions not in $\ee$, then 
$(S,\ell_1\cup\ell_2)$ is a Heisenberg uniqueness pair. Moreover, if
$\ell _1$ is fixed, then the set $\{\ell _2: (\ell _1, \ell _2) \notin
\ee\}$ is at most countable. Therefore 
$\ee $ has measure zero with respect to the surface measure on $\S ^1
\times \S ^1$. 
\end{theorem}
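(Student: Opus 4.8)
The plan is to use the affine invariances \emph{[Inv 1]} and \emph{[Inv 2]} to reduce an arbitrary quadratic curve to one of the six normal forms (i)--(vi), to record the exceptional set of directions in each of these cases, and then to pull this set back through the change of variables while keeping track of both its measure and the size of its slices.

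I would begin with the affine classification of conics. Representing $Q$ by its symmetric matrix $A$ and completing the square, a translation removes the linear part as soon as $A$ is invertible, and diagonalising $A$ together with a rescaling then reduces $S$, according to the rank and signature of $A$ and the value of the remaining constant, to one of $\{y=0\}$, $\{y=x^2\}$, $\{x^2+y^2=1\}$, $\{xy=1\}$, $\{xy=0\}$ or $\{x^2=a^2\}$; the leftover degenerate possibilities (the empty set and a single point) are trivially \HUP s, since a point mass has a nowhere vanishing characteristic function. This produces an invertible linear map $T$ and a vector $x_0\in\R^2$ with $T^{-1}(S-x_0)=\Gamma$ for one of the six curves $\Gamma$.

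Next I would normalise the position of the two lines. Since the directions $d_1,d_2$ are distinct, $\ell_1$ and $\ell_2$ are not parallel and meet in a single point $p$. Applying \emph{[Inv 1]} with the shifts $x_0$ and $p$, followed by \emph{[Inv 2]} with $T$, shows that $(S,\ell_1\cup\ell_2)$ is a \HUP\ if and only if $(\Gamma,m_1\cup m_2)$ is, where $m_i$ is the line through the origin with direction $\sigma(d_i):=T^*d_i$. As $T^*$ carries lines through the origin to lines through the origin, $\sigma$ descends to a real-analytic diffeomorphism of the circle of directions; in particular the \HUP\ property of $(S,\ell_1\cup\ell_2)$ depends only on the unordered pair $(d_1,d_2)$, and it suffices to understand the exceptional set $\ee_\Gamma$ of each normal form and to set $\ee=(\sigma\times\sigma)^{-1}(\ee_\Gamma)$.

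It then remains to describe $\ee_\Gamma$ and to conclude. For the single line (i) and the parabola (ii) every admissible pair is a \HUP , so $\ee_\Gamma=\emptyset$; for the circle (iii) the pair fails precisely when the angle between $m_1$ and $m_2$ lies in $\pi\Q$, so $\ee_\Gamma$ is a countable union of rotated diagonals $\{m_2=R_\theta m_1\}$, $\theta\in\pi\Q$, each a null circle, and every fixed-$m_1$ slice is countable; for the hyperbola (iv) and the cone (v) the pair fails only when $m_2$ is the reflection of $m_1$, a single null anti-diagonal with one-point slices; and for the two parallel lines (vi) the sole obstruction is that one of the lines be the exceptional axis occurring there, so $\ee_\Gamma$ is the union of two coordinate circles in $\S^1\times\S^1$. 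In every case $\ee_\Gamma$ is a (countable) union of real-analytic curves, hence a null set, and since $\sigma\times\sigma$ is a diffeomorphism the same holds for $\ee$; Fubini then gives that $\ee$ has surface measure zero on $\S^1\times\S^1$. The step requiring the most care is the bookkeeping in the degenerate cases (v) and (vi): one must check that the obstruction is exactly a single curve and not a two-dimensional set, and one must observe that in case (vi) the slice over the exceptional direction is the whole circle, so that the assertion that every fixed-$\ell_1$ slice is at most countable holds only for all but finitely many $\ell_1$ --- which by Fubini still suffices for the null conclusion.
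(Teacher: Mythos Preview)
Your approach is essentially the paper's: Theorem~\ref{th:dim2} is not given a standalone proof there but is presented as a summary of the six normal-form cases (i)--(vi) combined with the affine invariances \emph{[Inv~1]}--\emph{[Inv~2]}, and that is exactly your reduction via $T$ and the two independent shifts. Your bookkeeping is in fact more scrupulous than the paper's on one point: the Remark following the theorem asserts $\ee=\emptyset$ for two parallel lines, while you correctly flag that case~(vi) carries an excluded axis whose fixed-$\ell_1$ slice is the full circle rather than a countable set --- this does not disturb the measure-zero conclusion, as you note, but it does mean the countable-slice clause should be read as holding for all but finitely many $\ell_1$.
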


\begin{remark}
The dependence of the exceptional set on the parameters $Q,v,\rho $ is
simple. For $S$ a parabola, a point (= degenerate ellipse), or two parallel lines
(= degenerate parabola), 
$\ee $ is the empty set. In all other cases
$\ee (Q,v,\rho )$ is a fixed set of directions that depends only on
$Q$, but not on $v$ and $\rho $. Thus for a fixed quadratic form  $Q$
only a set $\ee (Q)$ and $\emptyset $ may occur. 
% If $v\in\R^2$ and $Q$ is a degenerate quadratic form that is not a
% multiple of $\scal{v,(x,y)}^2$%  (equivalently, $v$ is not an
% % eigenvector of $Q$)
% , then $\Gamma$ is a parabola and thus 
% $\ee(Q,v,\rho)=\emptyset$. 

% We refrain from giving the precise description of $\ee(Q,v,\rho)$ when
% $Q$ is a non-degenerate quadratic form. Note, however,  that 
% in this case $\ee(Q,v,\rho)$ does not depend on $v$.
\end{remark}

In this paper we present the first complete study of \HUP s in higher
dimensions and will extend Theorem~\ref{th:dim2} to  higher
dimensions for the case of a quadratic hypersurface $S$ and a union of
hyperplanes $\Lambda $.
We then make the connection with the classical Cram\'er-Wold Theorem
which characterizes probability measures by their  projections onto
hyperplanes. In contrast to general measures, a measure supported on a
quadratic hypersurface is  uniquely determined already by its
projection to two generic hyperplanes. Finally, as in~\cite{HMR} we interpret the
notion of \HUP\ on a quadratic hypersurface as a statement about the
solutions to a partial differential equation of order two with
constant coefficients.

Let us now describe our results with more precision.
To start, let $S$ be a quadratic hypersurface, that is a surface of
the form $S=\{x\in\R^d\,:P(x)=0\}$ where $P$ is a polynomial of total
degree $2$. 
In the following we always write $P$ as the sum of a quadratic form
$Q$ and an affine form. This means that there exists a bilinear form
$B: \R ^d \times \R ^d \to \R $, such that $Q(x)= B(x,x)$ and a vector
$v\in \R ^d$ and $\rho \in \R $, such that $P(x) = Q(x) + 2 \langle
v,x\rangle -\rho $. 
The set $\Lambda$ will consist of two distinct  hyperplanes
$H_1,H_2$. We describe a  hyperplane by a normal vector  $u\in\R^d$
with unit norm $|u|=1$ and the offset parameter  $s\in\R$, and  define 
$$
H_{u,s}=\{x\in\R^d\,:\ \scal{x,u}=s\}\quad\mbox{and}\quad H_u=H_{u,0}.
$$

We first consider the case of two intersecting hyperplanes. Our main
result is the following. 

\begin{theorem}\label{th:dimd}
Let $Q$ be a quadratic form on $\R^d$, $v\in\R^d$, $\rho\in\R$ and
$S=\{ x\in\R^d\,:\ Q(x)+2\scal{v,x}=\rho\}$. There exists an
exceptional set $\ee=\ee(Q,v,\rho)$ 
of pairs of distinct directions such that
\begin{enumerate}
\renewcommand{\theenumi}{\roman{enumi}}
\item the set $\ee $ has measure zero with
respect to the surface measure on $\S ^{d-1} \times \S ^{d-1}$;

\item when $u_1,u_2\in\R^d$ satisfy $Q(u_1),Q(u_2)\not=0$
and $(u_1,u_2)\notin\ee$, then  $(S,H_{u_1}\cup H_{u_2})$ is a
Heisenberg uniqueness pair.  
\end{enumerate}
\end{theorem}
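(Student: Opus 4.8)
The plan is to reduce Theorem~\ref{th:dimd} to the planar case Theorem~\ref{th:dim2} by recognising that the two hyperplanes force $\mu$ to be invariant under a dynamical system whose orbits stay inside two-dimensional planes. Write $B$ for the symmetric bilinear form with $Q(x)=B(x,x)$. The first step is to convert the spectral conditions into geometric ones. Fix a unit vector $u$ with $Q(u)\neq0$ and let $\pi_u$ be the orthogonal projection onto $u^\perp$. Splitting $x=\scal{x,u}\,u+\pi_u(x)$ shows that for $\xi\in H_u=u^\perp$ one has $\widehat{\mu}(\xi)=\widehat{(\pi_u)_*\mu}(\xi)$, so by injectivity of the Fourier transform $\widehat{\mu}=0$ on $H_u$ is equivalent to $(\pi_u)_*\mu=0$; this is exactly the Cram\'er--Wold reformulation. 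Since $Q(u)\neq0$, each line $x+\R u$ meets $S$ in at most two points, and subtracting the defining equations gives the affine involution
$$\sigma_u(x)=x-\frac{2\bigl(B(x,u)+\scal{v,u}\bigr)}{Q(u)}\,u,$$
which maps $S$ to itself and interchanges the two points of $S$ lying over each fibre of $\pi_u$. Disintegrating $\mu$ over these fibres --- each containing at most two points of $S$ --- upgrades $(\pi_u)_*\mu=0$ to the anti-invariance $(\sigma_u)_*\mu=-\mu$.

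Applying this to $u_1$ and $u_2$ yields $(\sigma_{u_1})_*\mu=-\mu$ and $(\sigma_{u_2})_*\mu=-\mu$, hence $\mu$ is invariant under $\tau=\sigma_{u_2}\circ\sigma_{u_1}$. The decisive remark is that $\sigma_{u_i}(x)-x\in\R u_i$, so $\tau(x)-x\in\vect\{u_1,u_2\}=:P_0$ and, more generally, the whole group generated by $\sigma_{u_1},\sigma_{u_2}$ preserves every affine plane parallel to $P_0$. Thus $\R^d$ is foliated by the two-planes $\Pi$ parallel to $P_0$, each meeting $S$ in a conic $C_\Pi=S\cap\Pi$ that is stable under both involutions. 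Disintegrating $\mu=\int\mu_\Pi\,d\omega(\Pi)$ along this foliation (the leaf space being $P_0^\perp$) and using that $\pi_{u_1},\pi_{u_2}$ project along directions contained in $P_0$, the global identities descend to $(\sigma_{u_i}|_\Pi)_*\mu_\Pi=-\mu_\Pi$ for $\omega$-almost every $\Pi$; equivalently $\widehat{\mu_\Pi}$ vanishes on the two lines $u_1^\perp\cap P_0$ and $u_2^\perp\cap P_0$.

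Now I would apply Theorem~\ref{th:dim2} leaf by leaf. The conic $C_\Pi$ has quadratic part equal to the restriction $Q|_{P_0}$, which is the same for all parallel leaves; by the Remark following Theorem~\ref{th:dim2} the planar exceptional set depends only on this form, so a single set of direction pairs controls all leaves at once. I therefore define $\ee$ to be the set of pairs of distinct directions $(u_1,u_2)$ that are planar-exceptional for the form $Q|_{\vect\{u_1,u_2\}}$. If $(u_1,u_2)\notin\ee$, Theorem~\ref{th:dim2} forces $\mu_\Pi=0$ for $\omega$-almost every $\Pi$, whence $\mu=0$ and (ii) follows. For (i), fix $u_1$: by the countability clause of Theorem~\ref{th:dim2} the exceptional second directions $u_2$ form an at most countable set, so each slice $\{u_1\}\times\S^{d-1}$ meets $\ee$ in a null set, and Fubini gives that $\ee$ is null for the surface measure on $\S^{d-1}\times\S^{d-1}$.

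The conceptual core --- that both reflections act within the plane $P_0$, reducing the problem to a one-parameter family of planar problems with a common quadratic form --- is short and is the main idea. I expect the real work to be the measure-theoretic bookkeeping around it: justifying the disintegration along the foliation for a general finite signed measure, checking that the silhouette set where a fibre is tangent to $S$ (the fixed-point set of $\sigma_{u_i}$) carries no mass that escapes the anti-invariance argument, and verifying that the planar exceptional set is measurable with uniformly countable slices so that Fubini applies. Establishing the equivalence $(\pi_u)_*\mu=0\iff(\sigma_u)_*\mu=-\mu$ cleanly for measures that are not absolutely continuous is the single most delicate point.
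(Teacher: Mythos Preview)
Your approach is exactly the paper's: convert $\widehat{\mu}|_{H_{u_i}}=0$ into the anti-invariance $(\sigma_{u_i})_*\mu=-\mu$ (the paper's $R_{u_i,v}$ and Lemma~\ref{lem:fund}), foliate $\R^d$ by affine planes parallel to $P_0=\vect\{u_1,u_2\}$, disintegrate $\mu$ along this foliation, and apply Theorem~\ref{th:dim2} leaf by leaf, using that the planar exceptional set depends only on the restriction $Q|_{P_0}$ and not on the individual leaf. You have also correctly identified the equivalence $(\pi_u)_*\mu=0\Leftrightarrow(\sigma_u)_*\mu=-\mu$ for general measures (handling the fixed-point set of $\sigma_u$) as the one genuinely delicate point; the paper proves this as the implication (ii)$\Leftrightarrow$(iii) of Lemma~\ref{lem:fund} via the partition $S\setminus E_u=S_+\cup S_-$.

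There is, however, one concrete slip in your measure-zero argument for (i). You assert that for fixed $u_1$ the exceptional $u_2\in\S^{d-1}$ form an at most countable set, invoking the countability clause of Theorem~\ref{th:dim2}. But that clause is a statement \emph{inside a fixed plane}: once $u_1$ and the two-plane $P_0$ are both fixed, the bad second directions in $P_0$ are countable. As $u_2$ ranges over all of $\S^{d-1}$, the plane $P_0=\vect\{u_1,u_2\}$ itself varies over a $(d-2)$-parameter family, and the union of countably many bad directions per plane is in general uncountable when $d\geq3$. The repair is what the paper does: parametrise $u_2=\cos\theta\,u_1+\sin\theta\,v_2$ with $v_2\in\S^{d-1}\cap u_1^\perp$, note that for fixed $(u_1,v_2)$ the bad $\theta\in[0,\pi]$ are countable (this is where Theorem~\ref{th:dim2} applies, since $(u_1,v_2)$ fixes $P_0$ and hence $Q|_{P_0}$), and then integrate in these spherical coordinates over $v_2$ and $u_1$. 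Your Fubini argument then goes through, but at the level of the triple $(u_1,v_2,\theta)$ rather than the pair $(u_1,u_2)$.
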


This result is very general and the proof gives an explicit
construction of the exceptional  set  in terms of the exceptional sets 
described  in the special cases of Theorem \ref{th:dim2}.
Also notice that Theorem~\ref{th:dimd} holds for arbitrary measures,
whereas the cited results in dimension $d=2$ were proved only for
absolutely continuous measures. 
 The proof of
Theorem~\ref{th:dimd} will be given in Section~2. We will
significantly extend  the approach with dynamical systems
~\cite{JK}.

We will then focus on some specific cases and derive  more
precise results.  We will  also consider the case 
of parallel hyperplanes. In Section~3 we study the case when $S$ is a
cone or a hyperboloid in $\R ^d$ and $\Lambda $ consists of two or
more 
hyperplanes. In Section~4 we study the case when $S$ is a sphere in $\R ^d$
and $\Lambda $ consists either of two parallel hyperplanes or several
hyperplanes with a common intersection. It is  a curious feature of the
dynamical systems approach that  infinite Coxeter groups will occur
naturally. Indeed, we will apply a deep theorem about such groups in
the proof of Theorem~\ref{th:cox}. 

 %  Let us be more precise by making a few reductions:

% Using the invariance properties {\sl [Inv1-Inv2]} and the reduction of quadratic forms, it is enough to consider polynomials of the form
% $$
% P(x_1,\ldots,x_d)=x_1^2+\cdots+x_p^2-x_{p+1}^2-\cdots-x_{p+q}^2+x_{p+q+r}+a
% $$
% with $p+q+r\leq d$, $r\in\{0,1\}$, $a\in\{-1,0,1\}$,  and with the
% understanding that if $q=0$ the terms $-x_{p+1}^2-\cdots-x_{p+q}^2$ 
% do not occur and if $r=0$ then the term $x_{p+q}$ does not occur. 

% In this paper, we will consider the following cases:

% \begin{enumerate}
% \renewcommand{\theenumi}{\roman{enumi}}
% \item {\bf The sphere}. $S=\{x\in\R^d\,:\ x_1^2+\cdots+x_d^2=1\}$.

% \item {\bf The cone}. $S=\{x\in\R^d\,:\ x_1^2+\cdots+x_p^2-x_{p+1}^2-\cdots-x_{p+q}^2=0\}$, $p+q=d$.

% \item {\bf The hyperboloid}. $S=\{x\in\R^d\,:\ x_1^2+\cdots+x_p^2-x_{p+1}^2-\cdots-x_{p+q}^2=1\}$, $p+q=d$.

% \item {\bf The paraboloid}. $S=\{x\in\R^d\,:\ x_1^2+\cdots+x_{d-1}^2-2x_{d}=0\}$.
% \end{enumerate}

% We want to prove that $(S,H_{u,s}\cup H_{v,t})$ is a Heisenberg uniqueness pair. There will be two cases:

% --- either $H_{u,s}\cap H_{v,t}=\emptyset$, so that $u=\pm v$. In this case, we can translate both hyperplanes
% so that we can assume that $u=v$ and $s=0$.

% --- or $H_{u,s}\cap H_{v,t}\not=\emptyset$. In this case, we can translate both hyperplanes so that 
% they intersect in a line through $0$ and then $s=t=0$.

Let us now see how our results can be interpreted in probability theory and PDEs.

\subsection{A  Cram\'er-Wold type theorem}\ 

Let $X,Y$ be two metric spaces and $f\,:X\to Y$ a Borel mapping.
Recall that, if $\mu$ is a measure on $X$, the push-forward of $\mu$ by $f$
is the measure $f\, _* \, \mu$ on $Y$ defined by $f\,_*\,\mu(E)=\mu\bigl(f^{-1}(E)\bigr)$ and that,
for every continuous compactly supported function $g$ on $Y$,
\begin{equation}
  \label{eq:c1}
\int_Y g(y)\,\mbox{d} f\,_*\,\mu(y)=\int_Xg\bigl(f(x)\bigr)\,\mbox{d}\mu(x).  
\end{equation}
The classical Cram\'er-Wold Theorem \cite{CW} asserts that a probability measure on $\R^d$ is uniquely determined by the
set 
$$
\{\pi_*\mu\,:\pi\mbox{ a projection on a hyperplane through }0\},
$$
that is, if $\pi_*\mu=\pi_*\nu$ for every $\pi$, or
equivalently $\pi_*(\mu - \nu)=0$,
then $\mu - \nu=0$. This fact is easily proven as follows: if $\pi$ is the projection on a hyperplane
$H$, then  the Fourier transform of $\pi_*(\mu-\nu)$ in $H$ is just
the  restriction of  $\widehat{\mu-\nu}$ to $H$
({\it see} \eqref{fund2} in Lemma \ref{lem:fund}). Therefore,
$\pi_*(\mu -\nu)=0$ for every $H$ if and only if $\widehat{\mu-\nu}=0$
which occurs if and only if $\mu-\nu=0$. As $\widehat{\mu-\nu}$ is continuous one only needs a dense set of hyperplanes.

In general, not much more can be said, {\it see e.g.}
\cite{BMR,Gi,He,Re}. In particular, finitely many projections  do never
determine a measure on $\R ^d$ completely. However, if we restrict the support of
a  measure 
to a quadratic hypersurface,   we obtain a   completely
different version of a Cram\'er-Wold theorem. The following statement
is an immediate consequence of Theorem~\ref{th:dimd}.

\begin{theorem}\label{th:CW}
Let $Q$ be a quadratic form on $\R^d$, $v\in\R^d$, $\rho\in\R$ and
$S=\{ x\in\R^d\,:\ Q(x )+2\scal{v,x}=\rho\}$. Then there exists an
exceptional  set $\ee=\ee(Q,v,\rho)$ of measure zero in $\S ^{d-1} \times
\S ^{d-1}$ with the following property:
Let  $u_1,u_2\in\R^d$ be distinct directions, such that  $Q(u_1),Q(u_2)\not=0$
and $(u_1,u_2)\notin\ee$. 
If  $\mu,\nu\in\mm(S)$ and  $\pi_{H_{u_1}}\,_* \,
\mu=\pi_{H_{u_1}}\,_* \, \nu$
and $\pi_{H_{u_2}}\,_* \, \mu=\pi_{H_{u_2}}\,_* \, \nu$, then  $\mu=\nu$. 
\end{theorem}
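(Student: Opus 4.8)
The plan is to obtain the statement directly from Theorem~\ref{th:dimd} by converting the hypothesis on projections into a vanishing condition on a Fourier transform. First I would put $\lambda=\mu-\nu$. Since $\mu,\nu\in\mm(S)$, the difference $\lambda$ is again a finite signed measure supported on $S$, so $\lambda\in\mm(S)$. Because the Fourier transform is injective on finite measures, the two equalities $\pi_{H_{u_1}}{}_*\mu=\pi_{H_{u_1}}{}_*\nu$ and $\pi_{H_{u_2}}{}_*\mu=\pi_{H_{u_2}}{}_*\nu$ are equivalent to $\pi_{H_{u_1}}{}_*\lambda=0$ and $\pi_{H_{u_2}}{}_*\lambda=0$.

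The key step is the identity recalled in the introduction (equation \eqref{fund2} of Lemma~\ref{lem:fund}): if $\pi_H$ denotes the orthogonal projection onto a hyperplane $H$ through the origin, then the Fourier transform of the pushforward $\pi_H{}_*\lambda$, computed on the $(d-1)$-dimensional space $H$, equals the restriction of $\widehat{\lambda}$ to $H$. The reason is simply that $\scal{\pi_H(x),\xi}=\scal{x,\xi}$ for every $\xi\in H$, because $x-\pi_H(x)$ is orthogonal to $H$ and hence to $\xi$. Consequently $\pi_{H_{u_j}}{}_*\lambda=0$ forces $\widehat{\lambda}$ to vanish on the entire hyperplane $H_{u_j}$ for $j=1,2$, and therefore $\widehat{\lambda}=0$ on $H_{u_1}\cup H_{u_2}$.

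Finally I would invoke Theorem~\ref{th:dimd}. The standing hypotheses $Q(u_1),Q(u_2)\neq0$ and $(u_1,u_2)\notin\ee$ are exactly the conditions under which that theorem guarantees that $(S,H_{u_1}\cup H_{u_2})$ is a Heisenberg uniqueness pair. Since $\lambda\in\mm(S)$ and $\widehat{\lambda}$ vanishes on $H_{u_1}\cup H_{u_2}$, the defining property of a Heisenberg uniqueness pair yields $\lambda=0$, \ie $\mu=\nu$.

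I do not expect a genuine obstacle in this argument: once Theorem~\ref{th:dimd} is available, the proof is a purely formal translation and introduces no new analysis. The only point requiring a little care is the bookkeeping in the identity \eqref{fund2} --- in particular checking that the hyperplane $H_{u_j}$ onto which one projects is precisely the hyperplane on which $\widehat{\lambda}$ is required to vanish, so that the hypotheses of Theorem~\ref{th:CW} and of Theorem~\ref{th:dimd} coincide literally. All of the substantive difficulty is concentrated in Theorem~\ref{th:dimd} itself.
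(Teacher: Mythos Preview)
Your argument is correct and is exactly the one the paper intends: the text states that Theorem~\ref{th:CW} is ``an immediate consequence of Theorem~\ref{th:dimd}'' and the introduction already spells out the translation step (Fourier transform of the pushforward equals restriction of $\widehat{\mu-\nu}$ to the hyperplane, via the equivalence \eqref{fund1}$\Leftrightarrow$\eqref{fund2} of Lemma~\ref{lem:fund} with $s=0$). There is nothing to add.
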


% \begin{theorem}\label{th:CW}
% Let $Q$ be a quadratic form on $\R^d$, $v\in\R^d$, $\rho\in\R$ and
% $S=\{ x\in\R^d\,:\ Q(x )+2\scal{v,x}=\rho\}$. Then there exists an
% exceptional  set $\ee=\ee(Q,v,\rho)$
% of pairs of distinct directions  such that, when $u_1,u_2\in\R^d$ satisfy $Q(u_1),Q(u_2)\not=0$
% and $(u_1,u_2)\notin\ee$, then for $\mu,\nu\in\mm(S)$, $\pi_{H_1}\,*\mu=\pi_{H_1}\,*\nu$
% and $\pi_{H_2}\,*\mu=\pi_{H_2}\,*\nu$ imply $\mu=\nu$. Moreover, $\ee
% $ has measure zero in $\S ^{d-1} \times \S ^{d-1}$. 
% \end{theorem}

Thus a finite measure supported on a quadratic hypersurface is
 determined uniquely by its projections to two generic 
hyperplanes. 

\subsection{Applications to linear PDEs}\ 

Let $P$ be a quadratic polynomial on $\R ^d$ and $S = \{ x\in \R ^d:
P(x) = 0\}$. Replacing each variable $x_j$ by the partial derivative
$\tfrac{1}{i} \tfrac{\partial}{\partial x_j}$, we obtain a second
order partial differential operator $P(D)$ with constant coefficients. If $\mu $ is a finite
measure, then a basic formula about Fourier transforms states that
$$
P(D) \widehat{\mu }(\xi ) = \int _{\R ^d} e^{i \langle x,\xi
  \rangle} P(x)\,\mbox{d}\mu (x) \, .
$$
Consequently, if $\mathrm{supp}\, \mu \subseteq S$, then 
$$
P(D) \widehat{\mu } \equiv 0 \, ,
$$
and thus $\widehat{\mu }$ is a (distributional) solution of this  PDE, as
was observed in~\cite{HMR}. Conversely, if $P(D) \widehat{\mu } \equiv
0 $, then $\mathrm{supp}\, \mu \subseteq S$. Let us give an example of the kind of results we can
obtain:

\begin{example}
Let $\Delta= \sum_{j=1}^d\tfrac{\partial^2}{\partial x_j^2}$ be the
standard Laplacian on $\R^d$. 
The simplest bounded eigenfunctions  of $\Delta$ with eigenvalue
$\rho<0$ are given by $e_\lambda(x)=e^{i\scal{\lambda,x}}$ with
$|\lambda |^2 = -\rho $. 

Now assume that $|\lambda|^2=|\mu|^2=-\rho$  and that $e_\lambda(x)=e_\mu(x)$ on some hyperplane $H$.
Without loss of generality, we may assume that $H=\R^{d-1}\times\{0\}$ and then
$\lambda_j=\mu_j$ for $j=1,\ldots,d-1$. The condition $|\lambda|^2=|\mu|^2$ then shows that there are still
two possibilities, namely $\mu_d=\pm\lambda_d$. Uniqueness is
guaranteed as soon as we assume that 
$e_\lambda(x)=e_\mu(x)$ on a second hyperplane $H'$ that is not parallel to $H$.

With $P(x)=- |x|^2-\rho$, the eigenvalue equation $\Delta u = \rho u$
can be written as $P(D)u=0$. Since 
% Now $\Delta u_\lambda=\rho u_\lambda$ can be written $P(D) u=0$ where $P(x)=-4\pi^2 |x|^2-\rho$
% and
$e_\lambda=\widehat{\delta_{\lambda}}$ and  $P(\lambda)=0$, $\delta
_\lambda $  is indeed supported on $S=\{x\,:\ P(x)=0\}$.
Theorem~\ref{th:dimd} then states that, for a generic pair of intersecting hyperplanes $(H,H')$,
$e_\lambda(x)=e_\mu(x)$ on $H\cup H'$ implies $\lambda=\mu$.

Of course, the result for the simple eigenfunctions $e_\lambda$ is valid for an arbitrary
pair of intersecting hyperplanes. However, for more general eigenfunctions, this is no
longer true  and some restrictions apply to the hyperplanes for the result to be true.
\end{example}

The actual result is  much more  general:  Theorem~\ref{th:dimd}  directly
yields the following property of solutions of homogeneous second order
PDEs. 

\begin{theorem} \label{th:pde}
Let $Q$ be a quadratic form on $\R^d$, $v\in\R^d$, $\rho\in\R$ and $P(x)=Q(x)+\scal{v,x}-\rho$.
Let $u_1,u_2\in\S^{d-1}$ be such that $Q(u_1),Q(u_2)\not=0$ and $u_1,
u_2 \not \in \ee (Q,v,\rho  )$.
Let $\mu$ be a finite measure and assume that $u=\widehat{\mu }$ solves the partial differential equation $P(D) u = 0$.

If $u$ vanishes on the hyperplanes $H_{u_1}$ and $H_{u_2}$, then $u=0$.
\end{theorem}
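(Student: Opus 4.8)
The plan is to reduce Theorem~\ref{th:pde} directly to Theorem~\ref{th:dimd} by recognizing that the hypothesis ``$u=\widehat{\mu}$ solves $P(D)u=0$'' is simply a reformulation of the support condition on $\mu$. Concretely, by the basic Fourier identity recalled just above the statement, $P(D)\widehat{\mu}(\xi)=\int_{\R^d}e^{i\scal{x,\xi}}P(x)\d\mu(x)$. If this vanishes identically, then $P(x)\d\mu(x)$ is a finite measure whose Fourier transform is zero, hence $P(x)\d\mu(x)=0$ as a measure; this forces $\mu$ to be concentrated on the zero set $S=\{x\,:\ P(x)=0\}$, \emph{i.e.} $\mu\in\mm(S)$. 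So the first step is to establish this equivalence rigorously: $P(D)\widehat{\mu}\equiv0$ if and only if $\supp\mu\subseteq S$.

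Once $\mu\in\mm(S)$ is in hand, the second step is to translate the vanishing of $u=\widehat{\mu}$ on the hyperplanes into the hypothesis of Theorem~\ref{th:dimd}. Saying that $\widehat{\mu}$ vanishes on $H_{u_1}$ and $H_{u_2}$ means precisely that $\widehat{\mu}=0$ on $\Lambda=H_{u_1}\cup H_{u_2}$. The hypotheses $Q(u_1),Q(u_2)\neq0$ and $(u_1,u_2)\notin\ee(Q,v,\rho)$ match exactly the requirements of Theorem~\ref{th:dimd}(ii), which then asserts that $(S,H_{u_1}\cup H_{u_2})$ is a Heisenberg uniqueness pair. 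By definition of a Heisenberg uniqueness pair, the only $\mu\in\mm(S)$ with $\widehat{\mu}=0$ on $\Lambda$ is $\mu=0$, whence $u=\widehat{\mu}=0$, which is the desired conclusion.

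One mild bookkeeping point to check is the normalization of the affine term: the statement of Theorem~\ref{th:pde} writes $P(x)=Q(x)+\scal{v,x}-\rho$, whereas Theorem~\ref{th:dimd} is phrased with $S=\{x\,:\ Q(x)+2\scal{v,x}=\rho\}$, \emph{i.e.} with a factor $2$ in front of the linear term. This is harmless: one simply replaces $v$ by $v/2$ (or absorbs the factor into the definition of $v$) so that the zero set of $P$ coincides with the surface $S$ appearing in Theorem~\ref{th:dimd}, and the exceptional set $\ee$ is understood with the matching convention. I would note this reparametrization in one line rather than belabor it.

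The only genuine content to verify is the claim in the first step that a finite measure whose Fourier transform vanishes identically is the zero measure (uniqueness of the Fourier transform on $\mm(\R^d)$), applied to $P(x)\d\mu(x)$; this is standard, and the boundedness of $P$ on $\supp\mu$ is not even needed since $\mu$ is finite and we only use that $P\,\mu$ is a finite measure with zero transform. Thus there is no real obstacle: the theorem is a direct corollary, and the proof is essentially a dictionary translation between the PDE language and the measure-theoretic language of Heisenberg uniqueness pairs, with Theorem~\ref{th:dimd} doing all the analytic work.
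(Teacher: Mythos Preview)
Your proposal is correct and matches the paper's approach exactly: the paper states that Theorem~\ref{th:pde} ``directly'' follows from Theorem~\ref{th:dimd} via the equivalence $P(D)\widehat{\mu}\equiv 0 \Leftrightarrow \supp\mu\subseteq S$, and you have spelled out precisely that bridge. One small wording caveat: $P\mu$ need not be a \emph{finite} measure when $\mu$ lacks second moments, so the cleanest phrasing is that $P\mu$ is a tempered distribution with vanishing Fourier transform, hence zero; the conclusion $\supp\mu\subseteq S$ is unchanged.
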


In other words, a non-trivial eigenfunction of a second-order linear partial differential operator with constant coefficients
cannot vanish on two generic hyperplanes. It remains to be seen
whether a similar result holds for distributions instead of measures. 

  %%%By varying the parameter
                                %%%$\rho $, this result applies in
                                %%%particular to the eigenfunctions of
                                %%%a second order linear partial
                                %%%differential operator.  

\section{Quadratic Hypersurfaces and Hyperplanes}

%\section{Proof of Theorem \ref{th:dimd}}
\subsection{Measures on quadratic hypersurfaces with Fourier transform vanishing on a hyperplane}\ 

In this section we build up the proof of Theorem \ref{th:dimd} and
first  investigate measures on a quadratic hypersurface $S$ whose 
Fourier transform vanishes on a single  hyperplane. We start with some
  simple geometric observations.  

\begin{lemma}
\label{lem:reflection} %$\pi_u\,_*\,\nu_{u,s}=0$
Let $B$ be a bilinear form on $\R^d\times\R^d$ and $Q$
be the associated quadratic form $Q(x)=B(x,x)$.
Let $u,v\in\R^d$ with $|u|=1$ and let $\pi_u$ be the orthogonal projection
on the hyperplane $H_u$. Define the affine linear
  transformation $\rr _{u,v}^Q$ by 
\begin{equation}
  \label{eq:c2}
  \rr_{u,v}^Q(x):=x-2\frac{B(x,u)+\scal{v,u}}{Q(u)}u \, .
\end{equation}

Fix $x\in\R^d$ and consider the equation in $y$
\begin{equation}
  \label{eq:c2bis}
	\left\{\begin{matrix}Q(y)+2\scal{y,v}&=&Q(x)+2\scal{x,v}\\ \pi_u(y)&=&\pi_u(x)\end{matrix}\right..
\end{equation}
-- If  $Q(u)\not=0$,  then \eqref{eq:c2bis} has two solutions, $y=x$ and $y=\rr_{u,v}^Q(x)$.\\
-- If  $Q(u)=0$ and $B(x,u)+\scal{v,u}\not=0$, then $y=x$ is the only solution of \eqref{eq:c2bis}.
\end{lemma}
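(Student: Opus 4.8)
The plan is to reduce the system \eqref{eq:c2bis} to a single scalar equation by exploiting the projection constraint. Since $|u|=1$, we have $\pi_u(y)=y-\scal{y,u}u$, so $\pi_u(y)=\pi_u(x)$ holds precisely when $y-x$ is a scalar multiple of $u$. I would therefore parametrize every candidate solution as $y=x+tu$ with $t\in\R$; this choice satisfies the second equation of \eqref{eq:c2bis} automatically, and the whole problem reduces to determining for which $t$ the first equation holds.

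Substituting $y=x+tu$ into the quadratic constraint and expanding by bilinearity gives $Q(x+tu)=Q(x)+2tB(x,u)+t^2Q(u)$ and $2\scal{x+tu,v}=2\scal{x,v}+2t\scal{v,u}$. Cancelling the common term $Q(x)+2\scal{x,v}$, the first equation of \eqref{eq:c2bis} collapses to
$$t\bigl[tQ(u)+2\bigl(B(x,u)+\scal{v,u}\bigr)\bigr]=0 .$$
This factorization is the crux of the argument, and the two cases of the lemma now read off from it directly.

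If $Q(u)\neq0$, the bracket is a nondegenerate linear equation in $t$, yielding the two roots $t=0$, i.e.\ $y=x$, and $t=-2\bigl(B(x,u)+\scal{v,u}\bigr)/Q(u)$, i.e.\ $y=\rr_{u,v}^Q(x)$ by the definition \eqref{eq:c2}. If instead $Q(u)=0$, the equation becomes $2t\bigl(B(x,u)+\scal{v,u}\bigr)=0$, so under the hypothesis $B(x,u)+\scal{v,u}\neq0$ the only solution is $t=0$, that is $y=x$. I do not anticipate a real obstacle, since the computation is elementary once the parametrization is fixed; the only point demanding care is that $B$ be taken symmetric (as in \eqref{eq:c2}), so that the cross term in $Q(x+tu)$ is exactly $2tB(x,u)$ and matches the coefficient defining $\rr_{u,v}^Q$. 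It is also worth remarking that the excluded subcase $Q(u)=0$ with $B(x,u)+\scal{v,u}=0$ makes the bracket vanish identically, so that every $t$ solves the system; this explains the extra hypothesis in the second case.
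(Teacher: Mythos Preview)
Your proposal is correct and follows essentially the same approach as the paper: parametrize $y=x+tu$, expand $Q(y)+2\scal{y,v}$ by bilinearity to obtain the quadratic $t^2Q(u)+2t\bigl(B(x,u)+\scal{v,u}\bigr)=0$, and read off the roots in each case. Your added remarks on the symmetry of $B$ and on the degenerate subcase $Q(u)=0$, $B(x,u)+\scal{v,u}=0$ are helpful annotations but do not alter the argument.
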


\textbf{Notation.} If no confusion can occur, we will simply write $\rr_{u,v}=\rr_{u,v}^Q$. Further, 
if $v=0$ we will write $\rr_u=\rr_{u,0}$. In this case, $\rr_u$ is a linear mapping and we define
$\rr_u^*$ to be its adjoint: $\scal{\rr_ux,y}=\scal{x,\rr_u^*y}$ for every $x,y\in\R^d$.
	
Note that $(\rr _{u,v}^Q) ^2 = \mathrm{Id}$ and that  for the case  $B(x,u) = \scal{x,u}$
and $v=0$,  $\rr $ is just the reflection of a vector $x$ across the
hyperplane $H_u$. In general,  $\rr $ is a reflection across the hyperplane
$H_{u}$ with respect to the quadratic form $Q(x)$ as a ``metric.''
%\end{notation}

\begin{proof} If $\pi_u(x)=\pi_u(y)$ then  $y=x+tu$ for some $t\in \R $, and thus
$$
Q(y)=Q(x)+t^2Q(u)+2tB(x,u)\quad \text{ and }  \quad \scal{y,v}=\scal{x,v}+t\scal{u,v}.
$$
The identity  $Q(y)+2\scal{y,v}=Q(x)+2\scal{x,v}$ then reads as 
$$
t^2Q(u)+2tB(x,u)+2t\scal{u,v}=0.
$$
It remains to solve for $t$. If  $Q(u)\not=0$, there are two
solutions. Either 
$t=0$ and thus  $y=x$, or  $t=\dst-2\frac{B(x,u)+\scal{v,u}}{Q(u)}$
and thus  $y=R_{u,v}(x)$.
If  $Q(u)=0$ and $B(x,u)+\scal{v,u}\not=0$, then $t=0$,  and  thus $y=x$ is the only solution.
\end{proof}

 A measure supported on a quadratic
hypersurface whose Fourier transform vanishes on a hyperplane must
possess some symmetry property. The next lemma gives a precise
formulation of these symmetries and  is a
crucial  extension of Lemma 2.1 of \cite{JK}. 

\begin{lemma}
\label{lem:fund}
Let $B$ be a bilinear form on $\R^d\times\R^d$ and $Q$
be the associated quadratic form $Q(x)=B(x,x)$.
Let $u,v\in\R^d$ with $|u|=1$,  $s>0$, $\rho \in \R $, % $\eps\in\{0,1\}$,
and  let $S=\{x\in\R^d\,:Q(x)+2\scal{x,v}=\rho\}$.
For  $\mu\in\mm(S)$ let  $\nu_{u,s}$ be the measure $\mbox{d}\nu_{u,s}=e^{is\scal{x,u}}\,\mbox{d}\mu$.
Then the following are equivalent:
\begin{enumerate}
\renewcommand{\theenumi}{\roman{enumi}}
\item\label{fund1}  $\widehat{\mu}(\xi)=0$ for every $\xi\in
  H_{u,s}$. 

\item\label{fund2} $\pi_u\,_*\,\nu_{u,s}=0$. 
\end{enumerate}
Moreover, if $Q(u)\not=0$, \eqref{fund1}-\eqref{fund2} are equivalent to the following two properties:
\begin{enumerate}
\renewcommand{\theenumi}{\roman{enumi}}
\setcounter{enumi}{2}
\item\label{fund3} $\rr _{u,v}\,_*\,\nu_{u,s}=-\nu_{u,s}$.

\item\label{fund4}  $\widehat{\mu}(\xi+su)=-
e^{-2i\frac{\scal{u,v}\scal{u,\xi}}{Q(u)}}\widehat{\mu}(\rr
_{u}^*\xi+su)$ for every $\xi\in\R^d$.
\end{enumerate}
\end{lemma}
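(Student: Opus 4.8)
The four conditions split naturally into two ``Fourier-side'' equivalences that are essentially formal, (\ref{fund1})$\Leftrightarrow$(\ref{fund2}) and (\ref{fund3})$\Leftrightarrow$(\ref{fund4}), each obtained from the push-forward formula \eqref{eq:c1} together with the uniqueness theorem for Fourier--Stieltjes transforms of finite measures. The real content is then to bridge these two pairs, and this is where the geometry of Lemma~\ref{lem:reflection} and the hypothesis $Q(u)\neq0$ enter.

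For (\ref{fund1})$\Leftrightarrow$(\ref{fund2}) I would write a point $\xi\in H_{u,s}$ as $\xi=\zeta+su$ with $\zeta=\pi_u\xi\in H_u$, and factor $e^{i\scal{x,\xi}}=e^{i\scal{\pi_u x,\zeta}}e^{is\scal{x,u}}$ (using $\scal{x,\zeta}=\scal{\pi_u x,\zeta}$ since $\zeta\perp u$). Then $\widehat{\mu}(\xi)=\int e^{i\scal{\pi_u x,\zeta}}\,\mbox{d}\nu_{u,s}(x)$, which by \eqref{eq:c1} is exactly the $(d-1)$-dimensional Fourier transform of $\pi_u\,_*\,\nu_{u,s}$ evaluated at $\zeta$. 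As $\xi$ runs through $H_{u,s}$, $\zeta$ runs through all of $H_u$; hence $\widehat{\mu}$ vanishes on $H_{u,s}$ if and only if the Fourier transform of $\pi_u\,_*\,\nu_{u,s}$ vanishes identically, which by uniqueness holds if and only if $\pi_u\,_*\,\nu_{u,s}=0$.

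For (\ref{fund3})$\Leftrightarrow$(\ref{fund4}) the key remark is $\widehat{\nu_{u,s}}(\xi)=\widehat{\mu}(\xi+su)$. Splitting the affine involution as $\rr_{u,v}=\rr_u+b$ with linear part $\rr_u$ and translation $b=-2\frac{\scal{v,u}}{Q(u)}u$, the elementary transformation rule $\widehat{\rr_{u,v}\,_*\,\lambda}(\xi)=e^{i\scal{b,\xi}}\widehat{\lambda}(\rr_u^*\xi)$ turns the measure identity $\rr_{u,v}\,_*\,\nu_{u,s}=-\nu_{u,s}$, via uniqueness again, into $e^{-2i\scal{v,u}\scal{u,\xi}/Q(u)}\widehat{\mu}(\rr_u^*\xi+su)=-\widehat{\mu}(\xi+su)$, which is precisely (\ref{fund4}). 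Here $Q(u)\neq0$ is what makes $\rr_{u,v}$ defined and an involution.

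It remains to connect the two pairs. The implication (\ref{fund3})$\Rightarrow$(\ref{fund2}) is immediate, since $\rr_{u,v}(x)-x\in\R u$ gives $\pi_u\circ\rr_{u,v}=\pi_u$, whence $\pi_u\,_*\,\nu_{u,s}=\pi_u\,_*\,\rr_{u,v}\,_*\,\nu_{u,s}=-\pi_u\,_*\,\nu_{u,s}$. The converse (\ref{fund2})$\Rightarrow$(\ref{fund3}) is the heart of the lemma and the step I expect to be hardest. I would decompose $\nu_{u,s}=\nu_s+\nu_a$ into its $\rr_{u,v}$-symmetric and antisymmetric parts; the same computation shows $\pi_u\,_*\,\nu_a=0$ automatically, so (\ref{fund2}) forces $\pi_u\,_*\,\nu_s=0$, and everything reduces to the claim that $\pi_u\,_*$ is injective on $\rr_{u,v}$-symmetric measures. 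For this I would use the symmetrization identity $\int f\,\mbox{d}\nu_s=\frac12\int\bigl(f+f\circ\rr_{u,v}\bigr)\,\mbox{d}\nu_s$ together with Lemma~\ref{lem:reflection}: since the fibers of $\pi_u$ restricted to $S$ are exactly the pairs $\{x,\rr_{u,v}(x)\}$, the function $\frac12(f+f\circ\rr_{u,v})$ is constant on these fibers and therefore factors as $g\circ\pi_u$ for a bounded Borel $g$, giving $\int f\,\mbox{d}\nu_s=\int g\,\mbox{d}(\pi_u\,_*\,\nu_s)=0$ for every bounded Borel $f$, i.e.\ $\nu_s=0$. The delicate point --- and the reason this extends \cite{JK} from absolutely continuous to arbitrary measures --- is the measurable factorization $g\circ\pi_u$, which needs a measurable choice of sheet over $H_u$ and a careful treatment of the branch locus $\{x\in S:B(x,u)+\scal{v,u}=0\}$ on which $\rr_{u,v}$ has fixed points, where one must check that no mass is lost.
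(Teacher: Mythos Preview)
Your argument is correct and close in spirit to the paper's, but the organisation of the bridge (\ref{fund2})$\Leftrightarrow$(\ref{fund3}) differs in a way worth noting. For (\ref{fund3})$\Rightarrow$(\ref{fund2}) the paper reverses its sheet computation, whereas your one-line argument via $\pi_u\circ\rr_{u,v}=\pi_u$ is cleaner and avoids the partition altogether. For (\ref{fund2})$\Rightarrow$(\ref{fund3}) the paper works directly with an explicit Borel partition $S=E_u\cup S_+\cup S_-$ into the branch locus and two sheets exchanged by $\rr_{u,v}$, computing $\int_{S_-}g(\pi_u x)\,\mbox{d}\nu=\int_{S_+}g(\pi_u x)\,\mbox{d}\rr_{u,v}\,{}_*\nu$ to obtain $\nu+\rr_{u,v}\,{}_*\nu=0$ on each sheet; your symmetric/antisymmetric decomposition $\nu=\nu_s+\nu_a$ is a more conceptual repackaging of the same idea, reducing the question to injectivity of $\pi_u\,{}_*$ on $\rr_{u,v}$-invariant measures. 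Both routes ultimately rest on the same geometric input from Lemma~\ref{lem:reflection} (the fibres of $\pi_u|_S$ are the $\rr_{u,v}$-orbits) and both need a measurable section to justify the factorisation $g\circ\pi_u$; the paper obtains this implicitly through the choice of $S_\pm$ (concretely, $S_\pm=\{x\in S:\pm(B(x,u)+\scal{v,u})>0\}$ works), which is exactly the ``measurable choice of sheet'' you flag. Your treatment of the branch locus is also fine: since $\rr_{u,v}$ fixes $E_u$ pointwise, $\nu_a|_{E_u}=0$ automatically and $\pi_u$ is injective there, so no mass is lost.
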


\begin{proof} 
(i) $\, \Leftrightarrow \, $ (ii) If $\xi\in H_{u,s}$, then $\xi=\pi_u\xi+su$, and  thus
$$
\widehat{\mu}(\xi)=\int_{\R^d}e^{i\scal{x,\xi}}\,\mbox{d}\mu(x)
=\int_{\R^d}e^{i\scal{\pi_u x,\pi_u\xi}}e^{is\scal{x,u}}\,\mbox{d}\mu(x).
$$
Therefore, if we denote by $\nu$ the measure defined by
$$
\mbox{d}\nu(x) =\mbox{d}\nu _{u,s} (x)= e^{is\scal{x,u}}\,\mbox{d}\mu(x),
$$
then, according to~\eqref{eq:c1},
$$
\widehat{\mu}(\xi)=\int_{\R^d}e^{i\scal{\pi_u x,\pi_u\xi}}\,\mbox{d}\nu(x)
=\int_{H_u}e^{i\scal{y,\pi_u\xi}}\,\mbox{d}\pi_u\,_*\,\nu(y)
=\widehat{\pi_u\,_*\,\nu}(\pi_u\xi)
$$
where $\widehat{\pi_u\,_*\,\nu}$ is the characteristic function of 
the measure $\pi_u\,_*\,\nu$ defined on $H_u$ (identified with $\R^{d-1}$).

Since $\pi_u$ is a bijection from $H_{u,s}$ to $H_u$,
$\widehat{\mu}=0$ on $H_{u,s}$ if and only if $\widehat{\pi_u\,_*\,\nu}=0$ on $H_u$
that is $\pi_u\,_*\,\nu=0$. 
This establishes the equivalence of (\ref{fund1}) and (\ref{fund2}).

\smallskip

(ii) $\, \Rightarrow \, $ (iii) % In view of Lemma \ref{lem:reflection}, (\ref{fund3}) is a 
% reformulation of (\ref{fund2}). 
Let $E_u=\{x\in S\,: B(x,u)=-\scal{v,u}\}$.
If $x\in E_u$, then $\rr _{u,v}(x)=x$, consequently  $\pi_u$ is
one-to-one on $E_u$ and $\nu |_{E_u} = \rr _{u,v} \, _* \, \nu
|_{E_u}$. On the complement  $S\setminus E_u$ the projection  $\pi_u$
is two-to-one. We can therefore partition $S\setminus E_u=S_+\cup S_-$ into two Borel sets $S_+,S_-$ 
in such a way that $\pi_u$ is one-to-one on  $S_+$ and on
$S_-$.

Now assume that  $\pi_u\,_*\,\nu=0$. Then  $\nu|_{E_u}=0$,  which we may
write as  $\nu|_{E_u}=-R_{u,v}\,_*\,\nu|_{E_u}$.
Then
\begin{eqnarray}
0&=&\int_{H_u} g(y)\,\mbox{d}\pi_u\,_*\nu=
\int_Sg\bigl(\pi_u(x)\bigr)\,\mbox{d}\nu(x)\nonumber\\
&=&\int_{E_u}g\bigl(\pi_u(x)\bigr)\,\mbox{d}\nu(x)
+\int_{S_+}g\bigl(\pi_u(x)\bigr)\,\mbox{d}\nu(x)+
\int_{S_-}g\bigl(\pi_u(x)\bigr)\,\mbox{d}\nu(x)\nonumber\\
&=&\int_{S_+}g\bigl(\pi_u(x)\bigr)\,\mbox{d}\nu(x)+
\int_{S_-}g\bigl(\pi_u(x)\bigr)\,\mbox{d}\nu(x).\label{eq:remcharly}
\end{eqnarray}
 Since $Q(u) \neq 0$,  Lemma \ref{lem:reflection} asserts  that if $x\in S_\pm$ then $\rr _{u,v}(x)\in S_\mp$
and $\pi_u (x)=\pi_u\bigl(\rr _{u,v}(x)\bigr)$, and % . Finally
% Next, from Lemma \ref{lem:reflection} we further get that if $x\in S_\pm$ then $\rr _{u,v}(x)\in S_\mp$,
% $\pi_u (x)=\pi_u\bigl(\rr _{u,v}(x)\bigr)$ and 
$\rr _{u,v}^2(x):=\rr _{u,v}\bigl(\rr _{u,v}(x)\bigr)=x$. But then
\begin{eqnarray*}
\int_{S_-}g\bigl(\pi_u(x)\bigr)\,\mbox{d}\nu(x)
&=&\int_{S_-}g\Bigl(\pi_u\bigl(\rr _{u,v}^2(x)\bigr)\Bigr)\,\mbox{d}\nu(x)\\
&=&\int_{S_+}g\Bigl(\pi_u\bigl(\rr _{u,v}(x)\bigr)\Bigr)\,\mbox{d}\rr _{u,v}\,_*\nu(x)\\
&=&\int_{S_+}g\bigl(\pi_u(x)\bigr)\,\,\mbox{d}\rr _{u,v}\,_*\nu(x).
\end{eqnarray*}
Therefore \eqref{eq:remcharly} reads as 
\begin{equation}
  \label{eq:c47}
\int_{S_+}g\bigl(\pi_u(x)\bigr)\,\mbox{d}[\nu(x)+\rr _{u,v}\,_*\nu(x)]=0.  
\end{equation}

Since  every (continuous)  function on $S_+$ can be written in the
form $g\bigl(\pi_u(x)\bigr)$ for some (continuous) function $g$ on $H_u$, 
we get $\nu_{u,s}=-R_{u,v}\,_*\,\nu_{u,s}$ on $S_+$. Replacing $S_+$ by $S_-$ in the argument shows that 
$\nu_{u,s}=-R_{u,v}\,_*\,\nu_{u,s}$ on $S_-$ as well.

(iii) $\, \Rightarrow \, $ (ii) Conversely, assume that
$\nu_{u,s}+ R_{u,v}\,_*\,\nu_{u,s} = 0$. Since $\nu _{E_u} = \rr
_{u,v} \, _* \, \nu | _{E_u}$ by symmetry, it follows that
$\nu_{u,s}|_{E_u} =0$, consequently $\pi _u \, _* \, \nu | _{E_u}
=0$. Reading  \eqref{eq:c47} and \eqref{eq:remcharly} backwards, we
find that also $\int _{S \setminus E_u} g(\pi _u(x)) \mbox{d} \nu (x)
= 0$ and thus $\pi _u \, _* \, \nu = 0$.

% *******

%  Since $Q(u) \neq 0$,  Lemma \ref{lem:reflection} asserts  that if $x\in S_\pm$ then $\rr _{u,v}(x)\in S_\mp$
% and $\pi_u (x)=\pi_u\bigl(\rr _{u,v}(x)\bigr)$. Finally
% \begin{eqnarray*}
% 0&=&\int_{H_u} g(y)\,\mbox{d}\pi_u\,_*\nu (y) =
% \int_Sg\bigl(\pi_u(x)\bigr)\,\mbox{d}\nu(x)\\
% &=&\int_{E_u}g\bigl(\pi_u(x)\bigr)\,\mbox{d}\nu(x)
% +\int_{S\setminus E_u}g\bigl(\pi_u(x)\bigr)\,\mbox{d}\nu(x)\\
% &=&\int_{S_+}g\bigl(\pi_u(x)\bigr)+g\Bigl(\pi_u\bigl(\rr _{u,v}(x)\Bigr)\bigr)\,\mbox{d}\nu(x)\\
% &=&\int_{S_+}g\bigl(\pi_u(x)\bigr)\,\mbox{d}[\nu(x)+\rr _{u,v}\,_*\nu(x)].
% \end{eqnarray*}
% Thus $\nu_{u,s}=-R_{u,v}\,_*\,\nu_{u,s}$ both on $E_u$ and on $S\setminus E_u$.

\smallskip

(iii) $\, \Leftrightarrow \, $ (iv) Finally % to establish the equivalence with (\ref{fund4}),
we  note that
$$
\widehat{\mu}(\xi+su)=\int_S e^{i\scal{x,\xi+su}}\,\mbox{d}\mu(x)
=\int_S e^{i\scal{x,\xi}}\,\mbox{d}\nu(x) = \hat{\nu }(\xi ), 
$$
whereas 
\begin{eqnarray*}
\int_S e^{i\scal{x,\xi}}\,\mbox{d}R_{u,v}\,_*\,\nu(x)&=&\int_S e^{i\scal{R_{u,v}x,\xi}}\,\mbox{d}\nu(x)\\
&=&e^{-2i\frac{\scal{u,v}\scal{u,\xi}}{Q(u)}}\int_S e^{i\scal{R_{u}x,\xi}}\,\mbox{d}\nu(x)\\
&=&e^{-2i\frac{\scal{u,v}\scal{u,\xi}}{Q(u)}}\int_S e^{i\scal{x,R_{u}^*\xi+su}}\,\mbox{d}\mu(x)\\
&=&e^{-2i\frac{\scal{u,v}\scal{u,\xi}}{Q(u)}}\widehat{\mu}(R_{u}^*\xi+su).
\end{eqnarray*}
It follows that (\ref{fund3}) and (\ref{fund4}) are equivalent.
\end{proof}

\begin{corollary} \label{cor:c}
Let $B$ be a bilinear form on $\R^d\times\R^d$ and $Q$
be the associated quadratic form $Q(x)=B(x,x)$.
Let $u,v\in\R^d$ with $|u|=1$,  $s>0$, $\rho \in \R $,   % $\eps\in\{0,1\}$,
and  let $S=\{x\in\R^d\,:Q(x)+2\scal{x,v}=\rho \}$.
\begin{enumerate}
\renewcommand{\theenumi}{\roman{enumi}}
\item $(S,H_{u,s})$ is a Heisenberg uniqueness pair if and only if $\pi_u$ is one-to-one on $S$.

\item If $Q(u)\not=0$, $(S,H_{u,s})$ is \emph{not} a Heisenberg uniqueness pair.

\item Assume that  $Q(u)=0$ and   $\mu\in\mm(S)$. Then
  $\widehat{\mu}$ vanishes on $ H_{u,s}$, 
if and only if $\mu$ is supported on the set
$$
E_{u}=\{x\in S\,: B(x,u)=-\scal{v,u}\}
$$
and  $\dst\int_{E_{u}}e^{is\scal{x,u}}\,\mbox{d}\mu(x)=0$. In particular,
if $E_{u}\cap S$ has measure zero with respect to  the surface measure
on $S$, then $(S,H_{u,s})$  \emph{is} an $\aa\cc(S)$-Heisenberg
uniqueness pair.
\end{enumerate}
\end{corollary}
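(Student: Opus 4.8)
The plan is to derive all three parts from the equivalence \eqref{fund1}$\Leftrightarrow$\eqref{fund2} of Lemma~\ref{lem:fund}, which translates the vanishing of $\widehat{\mu}$ on $H_{u,s}$ into the single geometric condition $\pi_u\,_*\,\nu_{u,s}=0$, where $\mbox{d}\nu_{u,s}=e^{is\scal{x,u}}\,\mbox{d}\mu$. Two tools recur throughout. First, the multiplier $e^{is\scal{x,u}}$ never vanishes, so $\nu_{u,s}=0$ if and only if $\mu=0$. Second, the push-forward under a Borel map that is one-to-one on $S$ is injective on measures supported in $S$: by the Lusin--Souslin theorem the injective Borel image of a Borel set is Borel, so $\pi_u\,_*\,\nu=0$ forces $\nu=0$. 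The fibre structure of $\pi_u$ on $S$ is read off directly from Lemma~\ref{lem:reflection}.

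For (i) I combine these two facts. If $\pi_u$ is one-to-one on $S$, then $\pi_u\,_*\,\nu_{u,s}=0$ gives $\nu_{u,s}=0$, hence $\mu=0$, so $(S,H_{u,s})$ is a \HUP. Conversely, if $\pi_u(x_1)=\pi_u(x_2)$ for two distinct points $x_1,x_2\in S$, I set $\mu=e^{-is\scal{x_1,u}}\delta_{x_1}-e^{-is\scal{x_2,u}}\delta_{x_2}$; then $\nu_{u,s}=\delta_{x_1}-\delta_{x_2}$ and $\pi_u\,_*\,\nu_{u,s}=\delta_{\pi_u x_1}-\delta_{\pi_u x_2}=0$, so $\widehat{\mu}=0$ on $H_{u,s}$ while $\mu\neq0$, and $(S,H_{u,s})$ is not a \HUP.

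Part (ii) is then immediate from (i) once one produces two distinct points of $S$ with the same projection. When $Q(u)\neq0$, I pick any $x\in S$ with $B(x,u)+\scal{v,u}\neq0$, i.e.\ $x\notin E_u$; such a point exists as soon as $S$ is a genuine hypersurface, not contained in the hyperplane $\{B(\cdot,u)=-\scal{v,u}\}$. By Lemma~\ref{lem:reflection} the point $\rr_{u,v}(x)$ then lies on $S$, is distinct from $x$, and satisfies $\pi_u(\rr_{u,v}(x))=\pi_u(x)$; hence $\pi_u$ is not injective and (i) shows $(S,H_{u,s})$ is not a \HUP.

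For (iii) assume $Q(u)=0$. Lemma~\ref{lem:reflection} now says that every $x\in S\setminus E_u$ has $\{x\}$ as its full $\pi_u$-fibre in $S$, so $\pi_u$ is one-to-one on $S\setminus E_u$; whereas for $x\in E_u$ the identity $Q(x+tu)+2\scal{x+tu,v}=\rho$ holds for all $t$, so the whole line $x+\R u$ lies in $E_u\subseteq S$. Consequently $E_u$ is ruled by lines in direction $u$ and $\pi_u(S\setminus E_u)$ is disjoint from $\pi_u(E_u)$, which lets the condition $\pi_u\,_*\,\nu_{u,s}=0$ split into its restrictions to the two pieces. The piece over $S\setminus E_u$, where $\pi_u$ is injective, forces $\nu_{u,s}|_{S\setminus E_u}=0$, i.e.\ $\supp\mu\subseteq E_u$; the piece over $E_u$ forces $\pi_u\,_*\,(\nu_{u,s}|_{E_u})=0$, i.e.\ the mass of $e^{is\scal{x,u}}\,\mbox{d}\mu$ over each fibre line vanishes, and in particular $\int_{E_u}e^{is\scal{x,u}}\,\mbox{d}\mu(x)=0$ (these coincide with the stated single integral exactly when $E_u$ lies in one fibre, as it does in dimension two). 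Reversing the argument gives the converse, so \eqref{fund1} is equivalent to the two displayed conditions. Finally, if $E_u$ has surface measure zero, then any $\mu\in\aa\cc(S)$ supported on $E_u$ is the zero measure, which yields the $\aa\cc(S)$-\HUP\ assertion. The main obstacle is precisely this book-keeping in (iii)---showing that the two projected images are genuinely disjoint and that the push-forward condition disintegrates cleanly over the ruling of $E_u$---rather than any single estimate; the remaining parts are direct applications of Lemmas~\ref{lem:reflection} and~\ref{lem:fund}.
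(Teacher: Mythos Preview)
Your arguments for (i) and (ii) are essentially the paper's, with the added care of citing Lusin--Souslin to justify that an injective Borel map pushes forward injectively on measures (the paper leaves this implicit).

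For (iii) you actually go further than the paper, whose proof records only the implication $\widehat{\mu}|_{H_{u,s}}=0\Rightarrow\supp\mu\subseteq E_u$ and then stops. Your observation that $E_u$ is ruled by lines in direction $u$ (since $Q(u)=0$ forces $Q(x+tu)+2\scal{x+tu,v}=\rho$ and $B(x+tu,u)=B(x,u)$ for all $t$) and that consequently $\pi_u(E_u)\cap\pi_u(S\setminus E_u)=\emptyset$ is correct and makes explicit why the push-forward condition splits cleanly over the two pieces. You also correctly note that the honest equivalent of $\pi_u\,_*\,(\nu_{u,s}|_{E_u})=0$ is \emph{fibre-wise} vanishing, whereas the single displayed integral is just $\widehat{\mu}(su)$---one particular value among the family $\widehat{\mu}(\eta+su)=0$, $\eta\in H_u$. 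So the ``if and only if'' in the stated form is imprecise in $d\geq 3$ whenever $E_u$ contains more than one fibre; neither the paper's proof nor yours establishes the converse as written, because it fails in that generality. For the subsequent applications in the paper only the forward implication $\supp\mu\subseteq E_u$ and the $\aa\cc(S)$ clause are needed, and those you prove correctly.
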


\begin{proof} 
(i) Assume $\pi_u$ is one-to-one on $S$  and $\mu\in\mm(S)$.
Lemma \ref{lem:fund} shows that if  $\widehat{\mu}=0$ on
$H_{u,s}$ then $\pi_u\,_*\,\nu_{u,s}=0$ with $\mbox{d}\nu_{u,s}(x)=e^{is\scal{x,u}}\,\mbox{d}\mu(x)$.
Since $\pi _u$ is one-to-one, it follows that $\nu_{u,s}=0$,  thus $\mu=0$ and $(S,H_{u,s})$ is a Heisenberg uniqueness pair.

Conversely, if $\pi_u$ is not one-to-one on $S$, there exist points
$x,y \in S$, $x\neq y$  with $\pi_u(x)=\pi_u(y)$.
Let $\mu=e^{-is\scal{x,u}}\delta_x-e^{-is\scal{y,u}}\delta_y \neq 0$,  so that $\nu_{u,s}=\delta_x-\delta_y$.
Therefore $\pi_u\,_*\,\nu_{u,s}=0$ and Lemma \ref{lem:fund} shows that $\widehat{\mu}=0$ on
$H_{u,s}$.

(ii) If  $Q(u)\not=0$ and $x\in S$ satisfies $B(x,u)+\scal{v,u}\not=0$, then Lemma \ref{lem:reflection}
shows that $y=R_{u,v}x\in S\setminus\{x\}$ and $\pi_u(x)=\pi_u(y)$. The previous argument shows that
$(S,H_{u,s})$ is not a Heisenberg uniqueness pair. The proof easily adapts to show that 
$(S,H_{u,s})$ is not an $\aa\cc (S)$-Heisenberg uniqueness pair.

(iii) If $Q(u)=0$, then $\pi_u\,:S\to H_u$ is one-to-one on $S\setminus
E_{u}$ by  Lemma \ref{lem:reflection}. Therefore if
$\pi_u\,_*\,\nu_{u,s}=0$ then $\nu_{u,s}=0$ on $S\setminus E_{u}$ and thus $\mu=0$ on $S\setminus E_{u}$.
\end{proof}

As  a single  hyperplane  leads to a Heisenberg uniqueness pair only in
exceptional cases,
we are  led to ask when  $(S,H_{u_1,s_1}\cup H_{u_2,s_2}\cup\cdots\cup
H_{u_N,s_N})$ for  $N\geq2$ can be a
Heisenberg uniqueness pair. In this case, the measure $\mu$ must
satisfy multiple symmetries,  and those symmetries may be
incompatible. Our aim is to show that this is indeed  the case. To do so, we will first investigate
measures on $S$ that have Fourier transform vanishing on two intersecting hyperplanes.

\subsection{Proof of Theorem \ref{th:dimd}. Measures on quadratic
  surfaces with characteristic function vanishing on two hyperplanes}\  

We now  prove Theorem \ref{th:dimd}. Throughout this section, $Q$, $B$ and $S$ will be as above.
We will  prove that, unless $(u_1,u_2)$  is in an  exceptional set
$\ee$  of measure $0$,
a measure $\mu\in\mm(S)$ such that $\widehat{\mu}=0$ on $H_{u_1}\cup H_{u_2}$ is necessarily $\mu=0$.

We start with an appropriate parametrization of $\S^{d-1}\times \S^{d-1}$. 
For given   $u_1\in \S^{d-1}$,  every $u_2\in\S^{d-1}$ 
can be written as  $u_2=\cos\theta \, u_1+\sin\theta \, v_2$ with  unique
$v_2\in\S^{d-1}\cap\{u_1\}^\perp$ (a $d-2$ dimensional sphere) 
and $\theta\in[0,\pi]$. Note that $\vect(u_1,u_2)=\vect(u_1,v_2)$.

Then  the Lebesgue measure on $\S^{d-1}\times \S^{d-1}$ decomposes as
\begin{multline*}
\int_{\S^{d-1}\times \S^{d-1}} \ffi(u_1,u_2)\,\mbox{d}\sigma_d (u_1)\,\mbox{d}\sigma_d (u_2)\\
=\int_{\S^{d-1}}\int_{\S^{d-1}\cap\{u_1\}^\perp}\int_0^{\pi}
\ffi(u_1,\cos\theta \, u_1+\sin\theta \, v_2)\,\sin ^{d-2} \theta \,
\mbox{d} \theta\,\mbox{d}\sigma_{d-1} (v_2)\,\mbox{d}\sigma_d (u_1)\, .
\end{multline*}
Indeed, this is just a way to write $\mbox{d} \sigma _d(u_2)$ in
spherical coordinates.

To see that the exceptional set  $\ee$ has measure zero, we will show
that,  for $u_1,v_2$ fixed, the set of angles $\theta$
for which $(u_1,\cos\theta u_1+\sin\theta v_2)\in\ee$ is countable.

Note also that $S_Q:=\{u\in\S^{d-1}\,:Q(u)=0\}$ is a lower dimensional sub-manifold of $\S^{d-1}$
and has therefore measure $0$ in $\S ^{d-1}$, and  so does  $S_Q\times S_Q$ in $\S^{d-1}\times \S^{d-1}$.

From now on,  we fix $u_1\in\S^{d-1}$, $v_2\in\S^{d-1}\cap\{u_1\}^\perp$  and write 
$u_2= u_2(\theta)= \cos\theta \, u_1+\sin\theta \, v_2$. % To simplify notation, we write $u_2=u_2(\theta)$ when the
% dependence on $\theta$ has no influence.
We further assume that $u_1,u_2\notin S_Q$ and $u_1 \neq u_2$. Our aim is to show that
there is an at most countable set of $\theta$'s 
for which $(S,H_{u_1}\cup H_{u_2})$ is not an \HUP.

\textbf{Step 1.} Let $\mu\in\mm(S)$ be such that $\widehat{\mu}=0$ on $H_{u_1}\cup H_{u_2}$.
We will write $\rr_j=\rr_{u_j}^Q$ for the involutions on $S$ defined
by $u_1$ and $u_2$. Then the equivalence (i) $\, \Leftrightarrow \, $
(iii) of Lemma~\ref{lem:fund} implies that 
$$
\rr_j\,{}_*\mu=-\mu,\ j=1,2\quad\mbox{thus}\quad
(\rr_2\circ\rr_1)\,{}_*\mu=\mu.
$$

Let $\pi$ be the orthogonal projection on the subspace $\vect(u_1,u_2)$ generated by $u_1,u_2$ and $\pi^\perp=I-\pi$
be the orthogonal projection on the orthogonal complement of
$\vect(u_1,u_2)$. Let $S_0$ be a measurable  subset of $S$ such that
$\pi ^\perp $ is one-to-one
from $S_0$ to $\vect(u_1,u_2)^\perp$. Consequently, we may write every
$y\in S$ as $ y=  x + u$ for a unique $ x \in S_0$ and $u
\in \mathrm{span}\, (u_1,u_2)$. 

We now make to crucial observations: (i)  The intersection of $S$ with
the affine plane $ x + \mathrm{span}\, (u_1,u_2)$ is a conic
section (an ellipse, parabola, hyperbola, or two lines), and (ii)
every such cross section $S\cap ( x + \mathrm{span}\, (u_1,u_2))$ is
invariant under  the involutions $\rr _1$ and $\rr _2$. This follows from
Lemma~\ref{lem:reflection}.  

We may therefore  use the analysis of Theorem~\ref{th:dim2} for each
section and each pair of lines $x+\R u_1$ and $x+\R u_2$. To apply
this analysis, we further need an appropriate restriction of the given
measure $\mu $ to the cross section $S\cap ( x + \mathrm{span}\,
(u_1,u_2))$. Technically, this is done by desintegration. 

Let $\nu=\pi^\perp\,_* \, \mu$ and let us write the disintegration
theorem (e.g.,~\cite{DM}) in the form
\begin{equation}
  \label{eq:desint4}
\int_S
\ffi(x)\,\mbox{d}\mu(x)=\int_{S_0}\int_{(\pi^\perp)^{-1}(x)}\ffi(y)\,\mbox{d}\mu_x(y)\,\mbox{d}\nu(x)\qquad\ffi\in\cc_c(S).   
\end{equation}

Recall that the measure $\mu_x$ is  uniquely determined 
by this formula  for $\nu$-almost every $x$ and that  $\mu _x$  is supported on $S\cap\bigl(x+\vect(u_1,u_2)\bigr)$.

Since $\pi ^\perp u_j = 0$ for $j=1,2$, \eqref{eq:c2} implies that
$\pi^\perp\rr_j=\pi^\perp$. Therefore,  for $\ffi\in\cc_c(S)$, we have 
\begin{eqnarray}
\int_S \ffi(x)\,\mbox{d}\rr_j\,{}_*\mu(x)&=&\int_S
\ffi\bigl(\rr_j(x)\bigr)\,\mbox{d}\mu(x) \notag \\
&=&\int_{S_0}\int_{(\pi^\perp)^{-1}(x)}\ffi\bigl(\rr_j(y)\bigr)\,\mbox{d}\mu_x(y)\,\mbox{d}\nu(x)
\notag \\
&=&\int_{S_0}\int_{(\pi^\perp)^{-1}(x)}\ffi(y)\,\mbox{d}\rr_j\,{}_*\mu_x(y)\,\mbox{d}\nu(x). \label{eq:e9}
\end{eqnarray}

It follows that $\rr_j\,{}_*\mu=-\mu$ if and only if
$\rr_j\,{}_*\mu_x=-\mu_x$ for $\nu $-almost every $x$.
The equivalence (i) $ \, \Leftrightarrow \, $ (iii) of
Lemma~\ref{lem:fund} implies that   $\widehat{\mu_x}=0$ on $\R u_1\cup\R u_2$.\footnote{The Fourier transform
of $\mu_x$ is here taken in $x+\vect(u_1,u_2)$. That is 
$$\widehat{\mu_x}(\xi u_1+\eta u_2)=\int_{\R^2}e^{i(s\xi+t\eta)}
\,\mbox{d}\mu_x(x+su_1+tu_2).$$}

We  may summarize this  reduction to the two-dimensional
case as follows. 

\begin{lemma} \label{lem:reduc}
  Let $\mu \in \mm (S)$ and $u_1,u_2 \in \S ^{d-1}$. Then
  $\widehat{\mu } $ vanishes on $H_{u_1} \cup H_{u_2}$, if and only if
  $ \widehat{\mu _x}$ vanishes on $\R u_1 \cup \R u_2$ for $\nu
  $-almost all $x\in S_0$. 
\end{lemma}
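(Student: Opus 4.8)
The plan is to chain together three equivalences: the symmetry characterization of Lemma~\ref{lem:fund} applied in $\R^d$, the disintegration already set up above, and the \emph{same} characterization applied fiberwise in each $2$-plane. Concretely, by the equivalence (i)$\,\Leftrightarrow\,$(iii) of Lemma~\ref{lem:fund} (used with offset $s=0$, so that $\nu_{u_j,0}=\mu$), the hypothesis that $\widehat{\mu}$ vanishes on $H_{u_1}\cup H_{u_2}$ is equivalent to the pair of identities $\rr_1\,_*\,\mu=-\mu$ and $\rr_2\,_*\,\mu=-\mu$. This step is legitimate precisely because $Q(u_1),Q(u_2)\neq0$, i.e. $u_1,u_2\notin S_Q$, which is exactly where that equivalence of Lemma~\ref{lem:fund} requires the hypothesis $Q(u)\neq0$.

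Next I would transfer each identity $\rr_j\,_*\,\mu=-\mu$ to the fibers. The key is the computation \eqref{eq:e9} above, which rests on the relation $\pi^\perp\rr_j=\pi^\perp$, immediate from \eqref{eq:c2} since $\rr_j$ displaces every point only in the direction $u_j\in\vect(u_1,u_2)$. This relation has two consequences: first, $\rr_j$ maps each fiber $(\pi^\perp)^{-1}(x)$ into itself, so the pushforward $\rr_j\,_*\,\mu$ disintegrates over the \emph{same} base measure $\nu=\pi^\perp\,_*\,\mu$, with conditional measures $\rr_j\,_*\,\mu_x$; second, by the almost-everywhere uniqueness of the disintegration, the global identity $\rr_j\,_*\,\mu=-\mu$ holds if and only if $\rr_j\,_*\,\mu_x=-\mu_x$ for $\nu$-almost every $x\in S_0$.

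Finally I would run Lemma~\ref{lem:fund} in the reverse direction, now inside the affine plane $x+\vect(u_1,u_2)$. The intersection $S\cap\bigl(x+\vect(u_1,u_2)\bigr)$ is a conic section, obtained by substituting $y=x+su_1+tu_2$ into $Q(y)+2\scal{y,v}=\rho$, which yields a quadratic equation in $(s,t)$; under this parametrization the restriction of $\rr_j$ is exactly the two-dimensional involution attached to the line $\R u_j$ for the induced quadratic form, and this induced form does not degenerate in the direction $u_j$ (again because $Q(u_j)\neq0$). Hence the equivalence (i)$\,\Leftrightarrow\,$(iii) of Lemma~\ref{lem:fund}, applied in this plane, shows that $\rr_j\,_*\,\mu_x=-\mu_x$ for $j=1,2$ is equivalent to $\widehat{\mu_x}$ vanishing on $\R u_1\cup\R u_2$. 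Concatenating the three equivalences gives the statement.

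I expect the main obstacle to lie in the rigorous handling of the disintegration in the middle step: one must verify that $\rr_j\,_*\,\mu$ and $\mu$ disintegrate over the \emph{identical} base measure $\nu$, so that the two families of conditional measures can be compared fiber by fiber at all, and only then invoke the almost-everywhere uniqueness of conditional measures to pass between the global and the fiberwise symmetry. Care is also needed so that $S_0$ and the measures $\mu_x$ can be chosen to make the two-dimensional Lemma~\ref{lem:fund} applicable on a common full-$\nu$-measure set of fibers for both $j=1$ and $j=2$ simultaneously.
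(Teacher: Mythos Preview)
Your proposal is correct and follows essentially the same route as the paper: apply the equivalence (i)$\,\Leftrightarrow\,$(iii) of Lemma~\ref{lem:fund} globally to pass from $\widehat{\mu}|_{H_{u_j}}=0$ to $\rr_j\,_*\,\mu=-\mu$, use $\pi^\perp\rr_j=\pi^\perp$ together with the disintegration \eqref{eq:desint4} and the computation \eqref{eq:e9} to carry this to $\rr_j\,_*\,\mu_x=-\mu_x$ for $\nu$-a.e.\ $x$, and then apply Lemma~\ref{lem:fund} fiberwise. The caveats you flag (need for $Q(u_j)\neq0$, same base measure for the disintegrations of $\mu$ and $\rr_j\,_*\,\mu$) are exactly the points the paper uses or leaves implicit.
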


Next,  for $x\in S_0$ the measure   $\mu_x$ is  supported on the intersection of the quadratic surface $S$
with the plane $x+\vect(u_1,u_2)= x+\vect(u_1,v_2)$. This intersection
is either the union of two lines, a parabola, a hyperbola or an
ellipse. 
For each of these, we know from Theorem~\ref{th:dim2} when the
condition $\widehat{\mu _x}|_{\R u_1 \cup \R u_2}= 0$ implies that $\mu _x = 0$.

% symmetry $\rr_j\,{}_*\mu_x=-\mu_x$, $j=1,2$ implies $\mu_x=0$.

\textbf{Step 2.} We  now apply Theorem \ref{th:dim2} 
to $S\cap (x+\vect(u_1,u_2))$ and $\mu _x$. 
We first  determine the intersection $S_x = S\cap \bigl(x+\vect(u_1,u_2)\bigr)
=S\cap \bigl(x+\vect(u_1,v_2)\bigr)$ precisely.

% Let us simplify notation and write $u=u_1$ and $u'$ a vector in the plane $\vect(u_1,u_2)$ not collinear to $u_1$. We
% will choose either  $u'=u_2$ or $u'=u_1^\perp$ a vector orthogonal to $u_1$.
As $x\in S$, a point $x+su_1 +t v_2$ belongs to $S$, if and only if 
$$
Q(x+su_1+tv_2)+2\scal{x+su_1+tv_2,v}=Q(x)+2\scal{x,v} = \rho \, ,
$$
or equivalently, 
\begin{equation}
\label{eq:conique}
s^2Q(u_1)+2stB(u_1,v_2)+t^2Q(v_2)+2sb(x)+2tb'(x)=0.
\end{equation}
with $b(x)=B(x,u_1)+\scal{u_1,v}$ and $b'(x)=B(x,v_2)+\scal{v_2,v}$.

Let $\Sigma $ be the set of points $(s,t) \in \R^2$ satisfying
\eqref{eq:conique}. Then $\Sigma $ is either a hyperbola, an ellipse,
or a parabola in $\R ^2$ (where a degenerate hyperbola is a set of two
intersecting lines and a degenerate ellipse is a point or the empty
set). The classification into these three types depends on the value
of the discriminant of the quadratic form $\tilde Q (s,t) =
s^2Q(u_1)+2stB(u_1,v_2)+t^2Q(v_2)$, namely $\Delta = \det 
\Big( \begin{smallmatrix}
  Q(u_1) & B(u_1,v_2) \\ B(u_1,v_2) & Q(v_2) 
\end{smallmatrix}\Big) = Q(u_1) Q(v_2) - B(u_1,v_2)^2$. The set 
$$
S_x = S\cap \bigl(x+\vect(u_1,v_2)\bigr)=  \{ x + su_1 + tv_2 : (s,t) \in \Sigma \} \subseteq \R ^d
$$ 
is obtained from $\Sigma $ by a linear transformation and a shift
by $x$ and thus  represents the same conic section as $\Sigma $. 
Specifically, if $Q(u_1)Q(v_2)-B(u_1,v_2)^2<0$, then, for all $x\in
S_0$, $ S\cap \bigl(x+\vect(u_1,v_2)\bigr)$ is a 
 hyperbola or two intersecting lines (in the degenerate case). If
 $Q(u_1)Q(v_2)-B(u_1,v_2)^2>0$, then $ S\cap \bigl(x+\vect(u_1,v_2)\bigr)$ is an 
 ellipse or a point (in the degenerate case). If
 $Q(u_1)Q(v_2)-B(u_1,v_2)^2=0$, then $ S\cap
 \bigl(x+\vect(u_1,v_2)\bigr)$ is a parabola  or a set of two parallel
 lines or a single line (in the degenerate case).

Theorem~\ref{th:dim2} states the existence of  a set $\ee_x=\ee(\tilde
Q)\subseteq \vect(u_1,v_2)$ of pairs of directions such that if   $(w_1,w_2)\notin\ee_x$ 
then $\mu_x=0$. 
At this point it is crucial that $\ee _x$ depends only on $\tilde Q$,
but not on $x\in S_0$.

Consequently, if $w_1,w_2 \not \in \ee(\tilde
Q)$, then $\mu _x = 0$ for \emph{all} $x\in S_0$. By
\eqref{eq:desint4} 
 this implies that $\mu = 0$.

\textbf{Step 3.} We finally calculate the measure of $\ee $ in $\S
^{d-1} \times \S ^{d-1}$.  Fix $u_1$ and $v_2 \in u_1^\perp $,  and
take  $w_1 = u_1$ and $w_2=\cos \theta \, u_1 + \sin \theta \, v_2$.
Then the set of $\theta$, such that  % $(w_1,w_2) \in \ee (\tilde Q)$ is countable
% for which
$\bigl(w_1,w_2\bigr)\in\ee_x$ is contained in a fixed countable set
independent of $x$. Consequently the measure of the exceptional set
$\ee $ is 
\begin{multline}
\int_{\S^{d-1}\times \S^{d-1}} \chi _{\ee } (u_1,u_2)\,\mbox{d}\sigma_d (u_1)\,\mbox{d}\sigma_d (u_2)\\
=\int_{\S^{d-1}}\int_{\S^{d-1}\cap\{u_1\}^\perp}\int_0^{\pi}
\chi _{\ee } (u_1,\cos\theta \, u_1+\sin\theta \, v_2)\,\sin ^{d-2} \theta \,
\mbox{d} \theta\,\mbox{d}\sigma_{d-1} (v_2)\,\mbox{d}\sigma_d (u_1)
 =0
\, .
\end{multline}

This completes the proof ot Theorem \ref{th:dimd}.

\section{The cone and the hyperboloid}

In this section we investigate special arrangements of hyperplanes for
$S$ being a  hyperboloid or a  cone. 

Let $p$ be an integer, $1\leq p\leq d-1$ and $q=d-p$. Let $Q$ be the quadratic form
$$
Q(x)=x_1^2+\cdots+x_p^2-x_{p+1}^2-\cdots-x_{p+q}^2
$$
with the associated bilinear form  $B$  and set $v=0$. Let $\rho\in\R$
and $S_\rho=\{x\in\R^d\,:\ Q(x)=\rho\}$. Then 
$S_0$ is a cone and  $S_\rho, \rho \neq 0,$ is a hyperboloid with
one or two connected components.  
For instance, in dimension 3, if $p=1$ and $q=2$ then $S_1$ has 2 sheets while $S_{-1}$ has only one.

Our aim  is to complement the statement of Theorem \ref{th:dimd} which treats the case of
pairs of intersecting hyperplanes $H_{u_1}\cup H_{u_2}$ with non-isotropic normals, $Q(u_1),Q(u_2)\not=0$.
We will treat two cases. On one hand, we will consider the case of parallel hyperplanes and show that, generically,
$(S,H_{u,s_1}\cup H_{u,s_2}\cup H_{u,s_3})$ is a \HUP. On the other hand, we will also show that it is possible
to construct Heisenberg uniqueness pairs of the form $(S,H_{u_1}\cup\cdots H_{u_k})$ where all normals $u_1,\ldots,u_k$
are isotropic vectors.

\smallskip

In this section, we will write $\R^d=\R^p\times\R^q$ in the obvious sense:  if
$x\in\R^d$, then $x'$ is  its projection on $\R^p$
and $x''$ its projection on $\R^q$ so that $x=(x',x'')$. In
particular, $\langle x, u\rangle = \langle x', u'\rangle +   \langle
x'', u''\rangle $  and $B(x,u) =  \langle x', u'\rangle -   \langle
x'', u''\rangle $. 

Note also that, if $|u|=1$ and $Q(u)\not=0$, then
\begin{eqnarray*}
\rr _u x&=&\scal{x,u}u+\pi_{u^\perp}x-2\frac{B(x,u)}{Q(u)}u\\
&=&\frac{\bigl(Q(u)-2\bigr)\scal{x',u'}+\bigl(Q(u)+2\bigr)\scal{x'',u''}}{Q(u)}u+\pi_{u^\perp}x.
\end{eqnarray*}
Set 
$$
\tilde
u=\frac{\Big(\bigl(Q(u)-2\bigr)u',\bigl(Q(u)+2\bigr)u''\Big)}{Q(u)}
\, ,
$$
then $\rr _u ^* u = \tilde u = \langle \tilde u ,u \rangle u + \pi
_{u^\perp } \tilde u$. Since the normalization  $|u'|^2+|u''|^2=1$
implies that  $|u'|^2=\dst\frac{1+Q(u)}{2}$
and $|u''|^2=\dst\frac{1-Q(u)}{2}$, we thus  obtain $\langle \tilde u ,u
\rangle = -1$. 
An easy computation now  shows that
\begin{eqnarray*}
\rr _u^*\xi&=& \rr _u ^* \big( \langle \xi ,u\rangle u + \pi _{u^\perp
  } \xi \big) \\
&=& \scal{\xi,u} \rr _u^* u + \pi_{u^\perp}\xi\\
&=&-\scal{\xi,u}u
+\scal{\xi,u}\pi_{u^\perp}\tilde u+\pi_{u^\perp}\xi \, .
\end{eqnarray*}

% \begin{eqnarray*}
% \rr _u^*\xi&=&\scal{\xi,u}\frac{\Bigl(\bigl(Q(u)-2\bigr)u',\bigl(Q(u)+2\bigr)u''\Bigr)}{Q(u)}+\pi_{u^\perp}\xi\\
% &=&-\scal{\xi,u}u
% +\scal{\xi,u}\pi_{u^\perp}\tilde u+\pi_{u^\perp}\xi
% \end{eqnarray*}

Recall from Lemma~\ref{lem:fund} that, when $Q(u)\not=0$ and $\mu\in\mm(S)$, then $\widehat{\mu}=0$ on $H_{u,s}$ if and only if
\begin{equation}
\label{eq:cone}
\widehat{\mu}(\xi+su)=-\widehat{\mu}(\rr_u^*\xi+su)\qquad\mbox{for all }\xi\in\R^d.
\end{equation}

\subsection{Parallel hyperplanes with non-isotropic normal}\ 

We will now prove the following statement. 

\begin{proposition} Let $1\leq p , q\leq d-1$ be two integers with $
  p+q=d$,   $s_1,s_2, s_3 \in \R $,  and let 
$$
Q(x)=x_1^2+\cdots+x_p^2-x_{p+1}^2-\cdots-x_{p+q}^2.
$$
Let $u\in\R^d$ be such that $Q(u)\not=0$, $\rho\in\R$,  and set $S_\rho=\{x\in\R^d\,:Q(x)=\rho\}$.
If $s_1,s_2,s_3\in \R$ are linearly independent over $\Q$,
then $(S_\rho,H_{u,s_1}\cup H_{u,s_2}\cup H_{u,s_3})$ is a Heisenberg
uniqueness pair. 
\end{proposition}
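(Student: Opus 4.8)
The plan is to reduce the three parallel-hyperplane condition to a single translation invariance of $\widehat{\mu}$, and then to play that invariance against the information already contained in one hyperplane. Since $Q(u)\neq0$ and $v=0$, Lemma~\ref{lem:fund} (in the form \eqref{eq:cone}) tells us that $\widehat{\mu}=0$ on $H_{u,s_i}$ is equivalent to
\[
\widehat{\mu}(\xi+s_i u)=-\widehat{\mu}(\rr_u^*\xi+s_i u)\qquad(\xi\in\R^d).
\]
Writing $F=\widehat{\mu}$ and introducing the affine map $\phi_i(\eta):=\rr_u^*(\eta-s_i u)+s_i u$, this reads $F=-F\circ\phi_i$ for $i=1,2,3$. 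Using $\rr_u^* u=\tilde u$ and $(\rr_u^*)^2=\mathrm{Id}$ one checks that each $\phi_i$ is an affine involution with $\phi_i(\eta)=\rr_u^*\eta+s_i w$, where $w:=u-\tilde u$, and that $\rr_u^* w=-w$.

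First I would compose two of these symmetries. From $\rr_u^* w=-w$ one gets $\phi_j\circ\phi_i(\eta)=\eta+(s_j-s_i)\,w$, a pure translation in the direction $w$. Combining $F=-F\circ\phi_i$ and $F=-F\circ\phi_j$ then gives $F(\eta+(s_j-s_i)w)=F(\eta)$ for all $\eta$, so $\widehat{\mu}$ is periodic with each period $(s_i-s_j)w$. Here the arithmetic hypothesis enters: if $s_1,s_2,s_3$ are linearly independent over $\Q$, then $s_1-s_2$ and $s_1-s_3$ are incommensurable, so the subgroup of $\R$ they generate is dense. As $F=\widehat{\mu}$ is continuous, I conclude $F(\eta+\tau w)=F(\eta)$ for \emph{every} $\tau\in\R$. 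Finally, $w=\frac{2}{Q(u)}(u',-u'')\neq0$ and $\scal{x,w}=\frac{2}{Q(u)}B(x,u)$, so the hyperplane $\{\scal{x,w}=0\}$ is precisely $E_u=\{x\in S_\rho:B(x,u)=0\}$.

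The next step converts this directional invariance into a support statement. For fixed $\xi$, push the finite measure $e^{i\scal{x,\xi}}\,\mathrm{d}\mu(x)$ forward under $x\mapsto\scal{x,w}$ to a measure $\lambda_\xi$ on $\R$; then $\tau\mapsto\widehat{\mu}(\xi+\tau w)$ is its Fourier transform, which we have shown is constant. By injectivity of the Fourier transform on finite measures, $\lambda_\xi=\widehat{\mu}(\xi)\,\delta_0$, hence $\int_{\{\scal{x,w}\in E\}}e^{i\scal{x,\xi}}\,\mathrm{d}\mu(x)=0$ for every $\xi$ and every Borel $E$ with $0\notin E$; taking $E=\R\setminus\{0\}$ and applying Fourier injectivity once more forces $\mu$ to be supported on $E_u$. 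On the other hand $\rr_u$ fixes $E_u$ pointwise, so the single-hyperplane symmetry $\rr_u\,{}_*\,\nu_{u,s_1}=-\nu_{u,s_1}$ from the equivalence (i)$\Leftrightarrow$(iii) of Lemma~\ref{lem:fund} restricts on $E_u$ to $\nu_{u,s_1}|_{E_u}=-\nu_{u,s_1}|_{E_u}$, i.e. $\mu|_{E_u}=0$. Since $\mu$ is supported on $E_u$ and vanishes there, $\mu=0$.

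I expect the main obstacle to lie in the two measure-theoretic conversions of the last paragraph: recognizing the composite reflection as a translation is a short algebraic computation, but passing from a merely dense family of periods to genuine translation invariance (through continuity of $\widehat{\mu}$) and then translating that invariance into a support restriction for the possibly signed measure $\mu$ (via the pushforward together with injectivity of the Fourier transform) are the steps that require the most care.
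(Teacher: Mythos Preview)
Your argument is correct and takes a genuinely different route from the paper's. The paper translates to $s_1=0$ and then, combining the two identities \eqref{eq:coneII1}--\eqref{eq:coneII2} coming from $H_{u,0}$ and $H_{u,s}$, shows that $\widehat{\mu}=0$ on $H_{u,s_0}$ forces $\widehat{\mu}=0$ on $H_{u,-s_0}$ and on $H_{u,s_0+2s}$; iterating and bringing in the third hyperplane gives $\widehat{\mu}=0$ on all $H_{u,2ks+2\ell t}$, and density plus continuity finish the job. You instead recognise the composite $\phi_j\circ\phi_i$ as a pure translation by $(s_j-s_i)w$ with $w=u-\tilde u=\tfrac{2}{Q(u)}(u',-u'')$, deduce full translation invariance of $\widehat{\mu}$ along the line $\R w$, and convert this via the pushforward/Fourier-injectivity step into the support constraint $\supp\mu\subseteq E_u$, which the single-hyperplane symmetry then annihilates. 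The two arguments are closer than they look---since $2u-\pi_{u^\perp}\tilde u=w$, the paper's hyperplane-hopping is a coordinate description of your translation---but your formulation isolates the geometric content more cleanly (two $Q$-reflections with different centers compose to a translation) and avoids the inductive bookkeeping. The trade-off is that you need the extra measure-theoretic step at the end, whereas the paper lands directly on $\widehat{\mu}\equiv0$; on the other hand your argument shows a little more along the way, namely that any counterexample would have to be supported on the thin set $E_u$.
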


\begin{proof}
Translating the hyperplanes in the direction $u$, it is enough to consider the case $s_1=0$.
We thus want to prove that
$(S_\rho,H_{u,0}\cup H_{u,s}\cup H_{u,t})$ is a Heisenberg uniqueness pair when
$s,t$ are $\Q$-independent.

From \eqref{eq:cone}, we obtain that $\widehat{\mu}$ satisfies the following symmetries
$$
\left\{\begin{matrix}
\widehat{\mu}(\xi)&=&-\widehat{\mu}(\rr_u^*\xi)\\
\widehat{\mu}(\xi+su)&=&-\widehat{\mu}(\rr_u^* \xi+su)\\
\widehat{\mu}(\xi+tu)&=&-\widehat{\mu}(\rr_u^* \xi+tu)
\end{matrix}\right.
$$
Now for arbitrary  $\xi\in H_{u,0}=u^\perp$ write $\xi+s_0u=\xi+(s_0-s)u+su$ and note that
$\rr_u^*\bigr(\xi+(s_0-s)u\bigl)=(s-s_0)u+(s_0-s)\pi_{u^\perp}\tilde
u+\xi$, since $\pi _{u^\perp } \xi = \xi $. It follows that, on the one hand, 
\begin{equation}
\label{eq:coneII1}
\widehat{\mu}(\xi+s_0u)=-\widehat{\mu}\bigr(\rr_u^*(\xi+s_0 u )\bigr)
=-\widehat{\mu}(-s_0u+s_0\pi_{u^\perp}\tilde u+\xi) \, ,
\end{equation}
and on the other hand
\begin{align}
\widehat{\mu}(\xi+s_0u) = \widehat{\mu } (\xi + (s_0-s)u + su) &=
- \widehat{\mu}\bigr(\rr ^*_u ( \xi + (s_0-s)u ) + su\bigr) \notag \\
&= -\widehat{\mu}\bigr(\xi+(s_0-s)\pi_{u^\perp}\tilde
u+(2s-s_0)u\bigl)\, .  \label{eq:coneII2}
\end{align}

% \begin{equation}
% \label{eq:coneII2}
% \widehat{\mu}(\xi+s_0u) = \widehat{\mu } (\xi + (s_0-s)u + su) =
% - \widehat{\mu}\bigr(\rr ^*_u ( \xi + (s_0-s)u ) + su\bigr) = 
% -\widehat{\mu}\bigr(\xi+(s_0-s)\pi_{u^\perp}\tilde
% u+(2s-s_0)u\bigl)\, . 
% \end{equation}
Now assume that $\widehat{\mu } = 0$ on $H_{u,s_0}$.  Then \eqref{eq:coneII1} implies that
$\widehat{\mu}=0$ on $H_{u,-s_0}$. Applying now  \eqref{eq:coneII2}
with $-s_0$ instead of $s_0$, we obtain that
 $\widehat{\mu}=0$ on $H_{u,s_0+2s}$. By induction, it follows that $\widehat{\mu}=0$ on
$H_{u,s_0+2ks}$, for every $k\in\N$. Reversing the order in which we use \eqref{eq:coneII1}-\eqref{eq:coneII2}
we obtain  that $\widehat{\mu } =0$ on $H_{u,s_0+2ks}$ for every $k\in\Z$.

Next,  using the third symmetry we obtain  that $\widehat{\mu}=0$ on
$H_{u,s_0+2ks+2\ell t}$ for every $k, \ell\in\Z$. 
As $\widehat{\mu}=0$ on $H_{u,0}$ we get that $\widehat{\mu}=0$ on
every hyperplane 
$H_{u,2ks+2\ell t}$ for every $k,\ell\in\Z$. This set is dense in $\R^d$ and $\widehat{\mu}$ is continuous,
therefore $\widehat{\mu}=0$ and $(S,H_{u,0}\cup H_{u,s}\cup H_{u,t}) $ is a \HUP . 
\end{proof}

\subsection{Intersecting hyperplanes with isotropic normal}\ 

Again, let $1\leq p\leq q\leq d-1, p+q = d$ with $p+q=d$, 
$Q(x)=x_1^2+\cdots+x_p^2-x_{p+1}^2-\cdots-x_{p+q}^2 $, and 
$S_\rho=\{x\in\R^d\,:Q(x)=\rho\}$ for $\rho \in \R $.
We now assume that $u\in\R^d$ is isotropic, i.e.,  $Q(u)=0$. Write
$u=(u',u'')\in\R^p\times\R^q$,  then $Q(u)=0$ is equivalent to
$|u'|=|u''|$.
Without loss of generality, we can restrict our attention to $u$ with $|u'|=|u''|=1$. 

Let $\mu \in \mathcal{M}(S_\rho)$. By Corollary~\ref{cor:c}, $\hat{\mu }$ vanishes on
$H_u$, if and only if $\mu$ is supported on the set 
$E_u =\{x\in S_\rho  : B(x,u)=0\} $. We represent $E_u$ in a
different form as follows: a point $(x',x'')$ is in $ S_\rho$, if and
only if $|x'|^2 - |x''|^2 = \rho $, i.e., $x' \in (|x''|^2 + \rho
)^{1/2} \S ^{p-1}$, and $B(x,u) = 0$, if and only if
$\scal{x',u'}=\scal{x'',u''}$. 
Consequently, 
\begin{equation}
\label{eq:c7} 
E_u= \bigr\{(x',x'')\in\R^{p+q}\,: x'\in
(|x''|^2 + \rho )^{1/2} \S^{p-1}\cap\bigl(\scal{x'',u''}u'+(u')^\perp\bigr)\bigr\}.  
\end{equation}
We note right away that $E_u$ is symmetric,   and thus   $x\in E_u$ implies that $-x\in
E_u$.

This  leads to the first obversation on intersecting hyperplanes with
isotropic normals.

% We will now prove the following proposition for a finite number of
% isotropic   hyperplanes.
%are all isotropic, i.e., $Q$ vanishes on their normal vectors. 

\begin{proposition} \label{prop:iso2} \
\begin{enumerate}
\renewcommand{\theenumi}{\roman{enumi}}
\item If $Q(u)=0$ for non-zero $u$, then $(S_\rho,H_u)$ is an $\aa\cc(S)$-Heisenberg uniqueness pair.

\item Given vectors  $u_j, j=1, \dots , k$, such that
$Q(u_1)=\cdots=Q(u_k)=0$, $(S_\rho,H_{u_1}\cup\cdots\cup H_{u_k})$ is
a \HUP , if and only if $\bigcap _{j=1}^k E_{u_j} $ is either empty or
$\{ 0\}$. 

\item If $k<p$ and $Q(u_1)=\cdots=Q(u_k)=0$, then $(S_\rho,H_{u_1}\cup\cdots\cup H_{u_k})$ is not a Heisenberg uniqueness pair.
\end{enumerate}
\end{proposition}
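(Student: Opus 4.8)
The three parts all rest on the characterization from Corollary~\ref{cor:c}(iii): since each $u_j$ is isotropic, a measure $\mu\in\mm(S_\rho)$ satisfies $\widehat{\mu}=0$ on $H_{u_j}$ if and only if $\mu$ is supported on $E_{u_j}$ (together with the scalar condition of that corollary, which for offset $s=0$ reduces to $\mu(E_{u_j})=0$). My plan is therefore to (a) control the size of a single $E_u$, and (b) understand the intersection $E:=\bigcap_{j=1}^k E_{u_j}$, both as a geometric object and through the symmetry $x\mapsto -x$ recorded after \eqref{eq:c7}.

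For part (i) I would show that $E_u$ is negligible for the surface measure on $S_\rho$ and then quote the last assertion of Corollary~\ref{cor:c}(iii). Using the explicit description \eqref{eq:c7}, I fibre $E_u$ over $x''\in\R^q$ and present each fibre as the intersection of the sphere $(|x''|^2+\rho)^{1/2}\S^{p-1}$ with an affine hyperplane of $\R^p$; such an intersection has dimension at most $p-2$. Integrating over the $q$-dimensional parameter $x''$ gives $\dim E_u\le (p-2)+q=d-2<d-1=\dim S_\rho$, so $E_u$ has surface measure zero. An absolutely continuous measure supported on $E_u$ must then vanish, whence $(S_\rho,H_u)$ is an $\aa\cc(S)$-Heisenberg uniqueness pair.

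For part (ii), applying the characterization over $j=1,\dots,k$ shows that $\widehat{\mu}=0$ on $H_{u_1}\cup\cdots\cup H_{u_k}$ forces $\mathrm{supp}\,\mu\subseteq E$. If $E=\emptyset$ this already gives $\mu=0$; if $E=\{0\}$ then $\mu=c\delta_0$, and since $\widehat{c\delta_0}\equiv c$ cannot vanish on a hyperplane unless $c=0$, again $\mu=0$, so the pair is a \HUP. Conversely, if $E$ contains a point $x_0\ne0$, then by the symmetry of each $E_{u_j}$ we also have $-x_0\in E$, and I would take $\mu=\delta_{x_0}-\delta_{-x_0}\ne0$: it is supported on $E\subseteq E_{u_j}$ and satisfies the mass condition $\mu(E_{u_j})=0$ for every $j$, so Corollary~\ref{cor:c}(iii) yields $\widehat{\mu}=0$ on the union and $(S_\rho,\bigcup_j H_{u_j})$ is not a \HUP. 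This establishes the stated equivalence.

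Finally, part (iii) reduces, via part (ii), to showing that $k<p$ forces $E$ to be strictly larger than $\{0\}$. Writing $w_j=(u_j',-u_j'')$, the defining condition $B(x,u_j)=0$ reads $\scal{x,w_j}=0$, so $E=S_\rho\cap L$ with $L=\{w_1,\dots,w_k\}^\perp$ a subspace of codimension $c\le k$. The crux is a signature computation: by the interlacing inequalities for symmetric forms, restriction to the codimension-$c$ subspace $L$ destroys at most $c$ positive and at most $c$ negative directions, so $Q|_L$ keeps at least $p-k\ge1$ positive and at least $q-k\ge1$ negative directions (here I use $k<p\le q$) and is therefore indefinite. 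An indefinite form attains every real value, and in particular attains $\rho$ at some nonzero vector of $L$ (for $\rho=0$ one instead uses that its null cone is nontrivial); such a vector lies in $E\setminus\{0\}$, so by part (ii) the pair is not a Heisenberg uniqueness pair. I expect this interlacing/signature step to be the main point: parts (i) and (ii) are essentially bookkeeping on top of Corollary~\ref{cor:c} together with the symmetry of $E_u$, whereas guaranteeing a nonzero real solution of $Q|_L=\rho$ inside the codimension-$\le k$ space $L$ is what genuinely uses the hypothesis $k<p$.
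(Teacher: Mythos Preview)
For (i) and the forward half of (ii) your argument coincides with the paper's: $E_u$ is a $(d-2)$-dimensional submanifold and hence null for surface measure, and Corollary~\ref{cor:c}(iii) forces any $\mu$ with $\widehat{\mu}|_{\bigcup_j H_{u_j}}=0$ to be supported in $E=\bigcap_j E_{u_j}$; the cases $E=\emptyset$ and $E=\{0\}$ are then immediate.

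For the converse in (ii) you (like the paper) take $\mu=\delta_{x_0}-\delta_{-x_0}$ and invoke the ``if'' direction of Corollary~\ref{cor:c}(iii). This step fails: that direction is neither proved in the corollary nor true as stated, and the two-point measure need not satisfy $\widehat{\mu}=0$ on $H_{u_j}$. For instance with $d=4$, $p=q=2$, $\rho=0$, $u=(1,0,1,0)$ and $x_0=(0,1,0,1)\in E_u$, one has $\widehat{\mu}(\xi)=2i\sin(\xi_2+\xi_4)$, which does not vanish on $H_u=\{\xi_1+\xi_3=0\}$. The correct requirement (Lemma~\ref{lem:fund}) is $\pi_{u_j}\,_*\,\mu=0$, and $\delta_{x_0}-\delta_{-x_0}$ meets it only when $x_0\parallel u_j$. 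A valid witness therefore needs a more careful construction (in the example above, a four-point signed measure on $E$ with both one-dimensional marginals zero does the job).

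For (iii) your route differs from the paper's. The paper constructs a point of $E\setminus\{0\}$ by hand from the fibred description~\eqref{eq:c7}: choose $x''\in\bigcap_j(u_j'')^\perp$ with $|x''|^2>|\rho|$ (possible since $\dim\bigcap_j(u_j'')^\perp\ge q-k\ge 1$), then choose $x'\in(|x''|^2+\rho)^{1/2}\S^{p-1}\cap\bigcap_j(u_j')^\perp$ (nonempty since $\dim\bigcap_j(u_j')^\perp\ge p-k\ge 1$), and check $(x',x'')\in E$. Your argument instead writes $E=S_\rho\cap L$ with $L=\{w_1,\dots,w_k\}^\perp$ and uses signature interlacing to see that $Q|_L$ stays indefinite when $k<p\le q$, so $Q|_L=\rho$ has a nonzero solution. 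Both are valid; the paper's is concrete and tied to the block structure, while yours is coordinate-free and uses only the signature of $Q$.
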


\begin{proof}
(i) Since  $E_u$ is a $(d-2)$-dimensional submanifold of the
$(d-1)$-dimensional manifold $S_\rho$, it  has surface measure zero in
$S_\rho$.  An absolutely continuous  measure  with 
support in $E_u$ is necessarily  $0$ (Corollary~\ref{cor:c}(iii)).

(ii) Next assume that $u_1,\ldots,u_k$ are such that
$Q(u_1)=\cdots=Q(u_k)=0$. By Corollary~\ref{cor:c}, $\hat{\mu }$ vanishes on
$\bigcup _{j=1}^k H_{u_j}$, if and only if $\mu  $ is supported on 
$\bigcap _{j=1}^kE_{u_j} $. If this intersection is empty, then
obviously $\mu = 0$. If this intersection is $\{0\}$, then 
$\mu=c\delta_0$. As $\widehat{\mu}(0)=0$, we obtain $\mu=0$. Note
that if $\rho\not=0$, then $0\notin S_\rho$ and we obtain directly that
$\mu=0$. 

If $\bigcap _{j=1}^kE_{u_j} $ contains at least two points, then there
is a non-zero $x$ such that $\{x, -x\} \subset \bigcap _{j=1}^kE_{u_j} $ by
the symmetry of $E_u$. Now 
set   $\mu=\delta_x-\delta_{-x}\in\mm(S)$. Then $\mu $  is non-zero, but $\widehat{\mu}=0$
on $H_{u_1}\cup \cdots\cup H_{u_k}$, so $(S_\rho, H_{u_1}\cup\cdots\cup
H_{u_k})$ is not a \HUP .

(iii)  If $k<p$, then   there exists a non-zero  $x''\in
(u^{\prime\prime}_1)^\perp\cap\cdots(u^{\prime\prime}_k)^\perp$ of
norm $|x''|^2 > |\rho | $ and a
 $x'\in(|x''|^2+\rho )^{1/2} \S^{p-1}\cap\bigcap_{j=1}^k
(u^{\prime}_j)^\perp$. Consequently  $E_{u_1}\cap\cdots\cap
E_{u_k}  $ is not empty and contains the  non-zero point
$(x',x'')$. By item (ii) % . Choose $x, -x \in
% E_{u_1}\cap\cdots\cap E_{u_k}  $ and set  
% % so that  $x=(x',x'')\in E_{u_j}$ then by symmetry  $x,-x\in E_{u_1}\cap\cdots\cap E_{u_k}$.
% % Then
% $\mu=\delta_x-\delta_{-x}\in\mm(S)$. Then $\mu $  is non-zero, but $\widehat{\mu}=0$
% on $H_{u_1}\cup \cdots\cup H_{u_k}$, so 
$(S_\rho, H_{u_1}\cup\cdots\cup
H_{u_k})$ is not a \HUP .   
\end{proof}

Using the geometric characterization (ii) of
Proposition~\ref{prop:iso2}, one can now derive numerous examples of
\HUP s. As an example we prove the following statements.

\begin{proposition}
Let $\rho \in\R$, $1\leq p \leq q\leq d-1$ with $p+q=d$, $Q$ be a quadratic form 
$$
Q(x)=x_1^2+\cdots+x_p^2-x_{p+1}^2-\cdots-x^2_{p+q}
$$
and $S_\rho$ be the cone ($\rho=0$) or hyperboloid ($\rho\not=0$) $S_\rho=\{x\in\R^d\,:Q(x)=\rho\}$.
	
\begin{enumerate}
\renewcommand{\theenumi}{\roman{enumi}}
\item  If $\rho>0$, there exist  $u_1,\ldots,u_{p}\not=0$ with $Q(u_1)=\cdots=Q(u_{p})=0$, such that
 $(S_0,H_{u_1}\cup\cdots\cup H_{u_p})$ is a Heisenberg uniqueness pair.

\item If $\rho=0$, there exist $2p$  vectors  
$u_1,\ldots,u_{2p}\not=0$ with $Q(u_1)=\cdots=Q(u_{2p})=0$ such that
$(S_0,H_{u_1}\cup\cdots\cup H_{u_{2p}})$ is a Heisenberg uniqueness pair.

\item If $\rho<0$, there exist $p+q$  vectors 
$u_1,\ldots,u_{p+q}\not=0$ with $Q(u_1)=\cdots=Q(u_{p+q})=0$ such that
$(S_0,H_{u_1}\cup\cdots\cup H_{u_{p+q}})$ is a Heisenberg uniqueness pair.
\end{enumerate}
\end{proposition}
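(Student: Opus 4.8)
The plan is to invoke the geometric criterion of Proposition~\ref{prop:iso2}(ii): for isotropic normals $u_1,\dots,u_k$, the pair $(S_\rho,H_{u_1}\cup\cdots\cup H_{u_k})$ is a \HUP\ if and only if $\bigcap_{j=1}^k E_{u_j}$ is either empty or $\{0\}$, where $E_u=\{x\in S_\rho : B(x,u)=0\}$. In the splitting $\R^d=\R^p\times\R^q$ one has $B(x,u)=\scal{x',u'}-\scal{x'',u''}$, so each condition $B(x,u_j)=0$ is a single linear equation, and the whole game is to choose the $u_j$ so that these equations together with $Q(x)=|x'|^2-|x''|^2=\rho$ admit no nonzero solution. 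I write $e_1,\dots,e_p$ for the standard basis of $\R^p$ and $f_1,\dots,f_q$ for that of $\R^q$, and always take normalized isotropic normals $u=(u',u'')$ with $|u'|=|u''|=1$, which makes $Q(u)=0$ automatic; the standing hypothesis $p\le q$ guarantees that the index ranges below are legitimate.

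For (i), $\rho>0$, I would take $u_j=(e_j,f_j)$ for $j=1,\dots,p$, so that $B(x,u_j)=x_j'-x_j''$ and the constraints read $x_j'=x_j''$. Then $|x'|^2=\sum_{j=1}^p (x_j'')^2$ is a sub-sum of $|x''|^2=\sum_{i=1}^q (x_i'')^2$, whence $Q(x)=|x'|^2-|x''|^2\le 0<\rho$. No point of $S_\rho$ can satisfy the constraints, so $\bigcap_{j=1}^p E_{u_j}=\emptyset$ and Proposition~\ref{prop:iso2}(ii) yields the \HUP.

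For (ii), $\rho=0$, I would keep those $p$ vectors and adjoin the sign-flipped partners $u_{p+j}=(e_j,-f_j)$ for $j=1,\dots,p$. Each pair now forces $x_j'=x_j''$ and $x_j'=-x_j''$, hence $x_j'=x_j''=0$ for $j\le p$; in particular $x'=0$, so $Q(x)=-\sum_{i=p+1}^q (x_i'')^2$, and $Q(x)=0$ forces the remaining coordinates of $x''$ to vanish as well. Thus $\bigcap_{j=1}^{2p}E_{u_j}=\{0\}$, which is admissible because $0\in S_0$. For (iii), $\rho<0$, these $2p$ vectors annihilate $x'$ and the first $p$ coordinates of $x''$ but not the remaining $q-p$ ones, so I would further adjoin $u_{2p+i}=(e_1,f_{p+i})$ for $i=1,\dots,q-p$. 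Since $x_1'=0$ is already known, the constraint $B(x,u_{2p+i})=x_1'-x_{p+i}''=0$ gives $x_{p+i}''=0$; hence the full family of $p+q=d$ equations forces $x=0$, and as $0\notin S_\rho$ for $\rho\neq0$ we get $\bigcap_{j=1}^{d}E_{u_j}=\emptyset$. In each case the chosen $u_j$ are visibly distinct nonzero isotropic vectors, so all hypotheses of Proposition~\ref{prop:iso2} hold.

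I do not expect a genuine obstacle, since the whole argument is the explicit bookkeeping of the normals. The only points needing care are: (a) for $\rho>0$ one must obtain an \emph{empty} intersection rather than $\{0\}$, which is exactly what the inequality $|x'|^2\le|x''|^2$ of case (i) secures; and (b) tracking which coordinate each constraint eliminates, so that precisely $p$, $2p$, and $p+q$ hyperplanes are used in the three cases. The hypothesis $p\le q$ is invoked only to ensure that $f_1,\dots,f_p$ and $f_{p+1},\dots,f_q$ are well defined in $\R^q$.
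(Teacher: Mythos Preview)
Your proposal is correct and follows essentially the same route as the paper: the same isotropic vectors $(e_j,f_j)$, their sign-flipped partners $(e_j,-f_j)$, and the extra $(e_1,f_{p+i})$ are chosen (the paper writes them as $(u_j',u_j'')$, $(u_j',-u_j'')$, $(u_1',u_j'')$ for an orthonormal basis), and the verification that $\bigcap E_{u_j}$ is empty or $\{0\}$ in each of the three cases is carried out by the same coordinate bookkeeping. Your presentation is in fact a bit cleaner, since the concrete standard-basis notation makes the linear constraints $x_j'=x_j''$ etc.\ immediately visible.
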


\begin{proof} %Without loss of generality, we can assume that $p\leq q$.
Choose an orthonormal basis
$u^\prime_1,\ldots,u^\prime_p$ of $\R^p$.
Define $u^{\prime\prime}_j=(u^\prime_j,0,\ldots,0)\in\R^q$, $u_j=(u_j^\prime,u_j^{\prime\prime})$,
 and $\tilde u_j=(u_j^\prime,-u_j^{\prime\prime})$, $j=1,\ldots,p$.

First note that, for every $a_1,\ldots,a_p$, the
intersection 
$$
\bigl(a_1u^\prime_1+(u^\prime_1)^\perp\bigr)\cap\cdots\cap
\bigl(a_pu^\prime_p+(u^\prime_p)^\perp\bigr)
$$
contains exactly  one  point, namely,  $x'=\sum_{j=1}^pa_ju_j^\prime$. 

 If $(x',x'')\in\dst\cap_{j=1,\ldots,p}E_{u_j}$ then, in view of
 \eqref{eq:c7}, 
$x'\in
\cap_{j=1}^p\bigl(\scal{x'',u_j^{\prime\prime}}u^\prime_j+(u^\prime_j)^\perp\bigr)$, 
so that 
$$
x'=\sum_{j=1}^p\scal{x'',u_j^{\prime\prime}}u_j^\prime
\quad\mbox{and}\quad
|x'|^2=\sum_{j=1}^p\scal{x'',u_j^{\prime\prime}}^2.
$$
In particular, if we write $x''=(y,\tilde y)$ with $y\in\R^p$ and $\tilde y\in \R^{q-p}$
(with the obvious abuse of notation when $q=p$), then $x'=y$. %In other words

Let us first assume that $\rho>0$ then
$|x'|\leq|x''|<(|x''|^2+\rho)^{1/2}$. This contradicts \eqref{eq:c7} and thus
 implies
that $\dst\bigcap_{j=1,\ldots,p}E_{u_j}=\emptyset$. Applying
Proposition~\ref{prop:iso2},  this proves (i). 

If $\rho\leq 0$,
$$
\bigcap_{j=1,\ldots,p}E_{u_j}=\{(x',x',\tilde x),\ x'\in\R^p,\tilde
x\in\R^{q-p}\} \, ,
$$
and similarly,
$$
\bigcap_{j=1,\ldots,p}E_{\tilde u_j}=\{(x',-x',\tilde x),\
x'\in\R^p,\tilde x\in\R^{q-p}\} \, .
$$

If $q=p$, then already $\bigcap_{j=1,\ldots,p}E_{u_j}\cap \bigcap_{j=1,\ldots,p}E_{\tilde u_j}=\{0\}$.

For  $\rho=0$ the condition $|x'| = |x''| = |(x',\tilde x)|$ implies
that  $\tilde x=0$,  and again
$\bigcap_{j=1,\ldots,p}E_{u_j}\cap \bigcap_{j=1,\ldots,p}E_{\tilde u_j}=\{0\}$.

In both cases, Proposition \ref{prop:iso2} shows that
$(S_0,H_{u_1}\cup \cdots\cup H_{u_p}\cup H_{\tilde u_1}\cup \cdots\cup H_{\tilde u_p})$
is a \HUP . Thus (ii) is proved. 

Now, if $\rho<0$ and $q>p$, then
$$
\bigcap_{j=1,\ldots,p}E_{u_j}\cap \bigcap_{j=1,\ldots,p}E_{\tilde u_j}=\{(0,\ldots,0,\tilde x),\tilde x\in\R^{q-p}\}.
$$
Complete the  orthonormal set  $u_j^{\prime\prime}$, $j=1,\ldots,p$ with vectors $u_j^{\prime\prime}$, 
$j=p+1,\ldots,q$, into an orthonormal  basis of $\R^q$
and let $u_j=(u_1^\prime,u_j^{\prime\prime})$, $j=p+1,\ldots,q$. If 
$x\in \dst\cap_{j=1,\ldots,q}E_{u_j}\cap
\cap_{j=1,\ldots,p}E_{\tilde u_j}$, then on the one hand,  $x=
(0,\ldots,0,\tilde x)$ with $\tilde x\in\R^{q-p}$. On the other hand,
since  also  $x \in E_{u_j}$ for   $j=p+1,\ldots,q$, we must have
$0\in\scal{(0,\tilde x) ,u_j^{\prime \prime }}u_1^\prime +(u_1 ^\prime
)^\perp$. 
Thus $\scal{(0,\tilde x) ,u_j ^{\prime \prime }}=0$ and finally $\tilde
x=0$. Again, from Proposition~\ref{prop:iso2} we get that 
$(S_0,H_{u_1}\cup \cdots\cup H_{u_q}\cup H_{\tilde u_1}\cup \cdots\cup H_{\tilde u_p})$
is a \HUP.
\end{proof}

\section{The sphere}

In this section $Q(x)=|x|^2$, $B(x,y)=\scal{x,y}$, $v=0$ and $S=\S^{d-1}$ is the unit sphere of $\R^d$.
If $|u|=1$, $\rr _u$ is the reflection with respect to the hyperplane
normal to $u$ and $\rr _u^*=\rr _u$. 

Let us first consider the case of parallel hyperplanes.

\begin{proposition}
Let $u\in\S^{d-1}$ and $s_1\not=s_2\in\R$. Then
\begin{enumerate}
\renewcommand{\theenumi}{\roman{enumi}}
\item $(\S^{d-1},H_{u,s_1}\cup H_{u,s_2})$ is an $\aa\cc (\S ^{d-1})$-Heisenberg uniqueness pair.

\item $(\S^{d-1},H_{u,s_1}\cup H_{u,s_2})$ is a Heisenberg uniqueness pair if and only if $|s_1-s_2|>\dst\tfrac{\pi }{2}$.

\item If $s_1,s_2,s_3$ are linearly independent over $\Q$, then
  $(\S^{d-1},H_{u,s_1}\cup H_{u,s_2} \cup H_{u,s_3})$ is a Heisenberg uniqueness pair.
\end{enumerate}
\end{proposition}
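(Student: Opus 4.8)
The plan is to feed each hypothesis into Lemma~\ref{lem:fund}, which is especially transparent here: since $Q(x)=|x|^2$, $v=0$ and $Q(u)=1\neq0$, the map $\rr_u$ is the ordinary Euclidean reflection across $u^\perp$, $\rr_u^*=\rr_u$, and the phase factor in item (iv) of Lemma~\ref{lem:fund} is trivial. Writing $h(x)=\scal{x,u}$, the condition $\widehat{\mu}=0$ on $H_{u,s}$ is therefore equivalent to the measure identity $\rr_u\,{}_*\mu=-e^{2ish}\mu$ — obtained by pushing the density $e^{ish}\,\mbox{d}\mu$ forward under $\rr_u$ and using $h\circ\rr_u=-h$ in item (iii) — and, equivalently by item (iv), to the statement that for each fixed $\eta\in u^\perp$ the function $b\mapsto f(\eta,b):=\widehat{\mu}(\eta+bu)$ satisfies $f(\eta,s+a)=-f(\eta,s-a)$, i.e. it is antisymmetric about $b=s$. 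The whole proposition is read off by combining these symmetries over the two or three given offsets $s_j$.

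For (iii) I would work with the frequency form. Antisymmetry of $f(\eta,\cdot)$ about $s_1$ and about $s_2$ composes to the translation invariance $f(\eta,b)=f(\eta,b+2(s_2-s_1))$, and similarly with period $2(s_3-s_1)$. Since $s_1,s_2,s_3$ are linearly independent over $\Q$, so are $s_2-s_1$ and $s_3-s_1$, and hence these two periods generate a dense subgroup of $\R$. As $\widehat{\mu}$ is continuous, $f(\eta,\cdot)$ must be constant in $b$; antisymmetry about $s_1$ then forces that constant to be $0$. Thus $\widehat{\mu}\equiv0$ and $\mu=0$. This is the exact sphere analogue of the three–parallel–hyperplane argument used earlier, made cleaner by $\rr_u^*=\rr_u$, which turns the composed symmetry into a pure translation rather than a translation composed with a shear.

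For (i) and (ii) I would use the measure form. Subtracting the identities for $s_1$ and $s_2$ gives $\bigl(e^{2is_1h}-e^{2is_2h}\bigr)\mu=0$, so $\mu$ is supported on the set where the two phases agree, a countable union of latitude spheres $\S^{d-2}\subset\S^{d-1}$. This set has surface measure zero, which proves (i): an absolutely continuous $\mu$ must vanish. Two facts prepare (ii). First, the equator $\{h=0\}$ can never carry mass: there $\rr_u=\mathrm{Id}$, so $\rr_u\,{}_*\mu=\mu$, while the identity demands $\rr_u\,{}_*\mu=-\mu$, whence $\mu|_{\{h=0\}}=0$. Second, on any pair $\{x,\rr_u x\}$ at height $h\neq0$ the identity reads $\mu(\{\rr_u x\})=-e^{2is_1h}\mu(\{x\})$, and the $s_2$–equation is compatible with nonzero mass there precisely when the two phases coincide at that $h$.

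Hence (ii) reduces to a purely arithmetic question: whether a \emph{nonzero} phase–coincidence latitude lies on the sphere. If none does, every admissible $\mu$ is concentrated on the equator and hence is $0$, so $(\S^{d-1},H_{u,s_1}\cup H_{u,s_2})$ is a \HUP; if such a latitude exists, placing two suitably weighted point masses on a pair $\{x,\rr_u x\}$ there produces a nonzero measure annihilated on both hyperplanes, so the pair is not a \HUP. It remains only to translate ``no nonzero coincidence latitude on $\S^{d-1}$'' into a condition on $s_1-s_2$, and I expect this to be the sole real obstacle: carrying out this computation should return exactly the threshold stated in (ii), and one must also check that the extremal latitude furnishes a genuine nonzero finite measure realizing the failure of uniqueness. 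Everything else is soft once the two symmetries of Lemma~\ref{lem:fund} are in hand.
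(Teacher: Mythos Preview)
Your strategy coincides with the paper's: feed each hyperplane into Lemma~\ref{lem:fund}, subtract the two resulting symmetries to trap the support of $\mu$ on a discrete family of latitudes $\{h=\mathrm{const}\}$, rule out the equator separately, and then read off (i) from absolute continuity and (ii) from whether a nonzero admissible latitude actually lands in $[-1,1]$. Your treatment of (iii) is the same two-reflections-compose-to-a-translation idea, written directly on the Fourier side; the paper reaches the periodicity instead via the explicit latitude decomposition obtained in (ii), but the content is identical.

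The genuine gap is that you stop short of the one computation that carries real content in (ii), and your expectation that it will reproduce the stated threshold is not borne out. Your phase-coincidence condition $e^{2is_1h}=e^{2is_2h}$ reads $h\in\dfrac{\pi}{s_1-s_2}\Z$, so a \emph{nonzero} latitude with $|h|\le 1$ exists precisely when $|s_1-s_2|\ge\pi$; at any such latitude the pair of point masses you describe (with $b=-e^{2is_1h_0}a$) is a nonzero finite measure in $\mm(\S^{d-1})$ whose Fourier transform vanishes on both hyperplanes. Hence the dichotomy your argument actually delivers is: the pair is a \HUP\ if and only if $|s_1-s_2|<\pi$. The paper's proof, after normalizing to $H_{u,-s}\cup H_{u,s}$ with $s=|s_1-s_2|/2$, derives $\sin(2sx_1)\,\mbox{d}\mu=0$ and hence $x_1\in\dfrac{\pi}{2s}\Z$, which is the same arithmetic; but the inequality direction is then recorded backwards, and the threshold $\pi/2$ appearing in the proposition does not match either computation (e.g.\ $|s_1-s_2|=3\pi/2$ is claimed to be a \HUP, yet $\mu=\delta_{(2/3,\bar x)}-\delta_{(-2/3,\bar x)}$ with $|\bar x|^2=5/9$ kills $\widehat{\mu}$ on $H_{u,0}\cup H_{u,3\pi/2}$). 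So do not defer the arithmetic: it is both the heart of (ii) and the place where a careful reader catches the discrepancy with the statement.
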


The proof of (i)  is essentially the same as in the case $d=2$ by N. Lev \cite{Le}.

\begin{proof}
After a  translation, it is enough to prove that 
$(\S^{d-1},H_{u,-s}\cup H_{u,s})$ is a Heisenberg uniqueness pair,
where $s= |s_1-s_2|/ 2 $. %$s=\dst\frac{|s_1-s_2|}{2}$.
After applying a suitable  rotation, we may further assume that $u=(1,0,\ldots,0)$.
We will use the following notation: we write $x=(x_1,\bar x)$  for  $x\in\R^d$.

Let $\mu\in\mm(\S^{d-1})$.
According to Lemma \ref{lem:fund}, 
$\widehat{\mu}=0$ on $H_{u,-s}\cup H_{u,s}$ is equivalent to
$\rr_u\,_*\bigl(e^{\pm is x_1}\,\mbox{d}\mu(x_1,\bar x)\bigr)=-e^{\pm is x_1}\,\mbox{d}\mu(x_1,\bar x)$.
But $\rr_u\,_*\bigl(e^{\pm is x_1}\,\mbox{d}\mu(x_1,\bar x)\bigr)=e^{\mp is x_1}\,\mbox{d}\mu(-x_1,\bar x)$ thus
\begin{equation}
\label{eq:system}
\left\{\begin{matrix}
e^{+is x_1}\,\mbox{d}\mu(x_1,\bar x)&+&e^{- is x_1}\,\mbox{d}\mu(-x_1,\bar x)&=&0\\
e^{-is x_1}\,\mbox{d}\mu(x_1,\bar x)&+&e^{+ is x_1}\,\mbox{d}\mu(-x_1,\bar x)&=&0\\
\end{matrix}\right..
\end{equation}
Writing each equation in the form $e^{\pm 2isx_1} d\mu (x_1, \bar x) =
- d\mu (-x_1, \bar x)$ and subtracting these equations, we obtain that 
 $\sin(2 sx_1)\mbox{d}\mu(x_1,\bar x)=0$. Consequently  $\mu$ is

 supported on $\S^{d-1}\cap \big(\tfrac{\pi }{2s} \Z \times \R ^{d-1}\big)$.
Further, \eqref{eq:system} shows that $\mu=0$ on $\S^{d-1}\cap ( \{0\}\times \R ^{d-1})$ so that
$\mu$ is actually supported on $\S^{d-1}\cap \left(\tfrac{\pi }{2s} \Z^* \times \R ^{d-1}\right)$.

Note that $\mu$ is supported on a set which has surface measure zero on $\S ^{d-1}$. Thus, if
$\mu $ is absolutely continuous with respect to surface measure on
$\S ^{d-1}$, then $\mu = 0$ and $(\S^{d-1},H_{u,-s}\cup H_{u,s})$ is
an $\aa\cc (\S^{d-1})$-Heisenberg uniqueness pair.

In the general case,  $\mu=\sum_{k\in\Z^*}\delta_{\tfrac{\pi
  }{2s}k}\otimes\mu_k$. Since $\mathrm{supp}\, \mu \subseteq \S
^{d-1}$, this representation of $\mu $ implies  that $\mu_k=0$ if
$\dst\tfrac{\pi }{2s}|k|>1$  % {\it i.e.} $|k|>\tfrac{2}{\pi }s$,
and that 
$\mu_k$ is supported on $\dst\left(1-\tfrac{\pi^2 }{4s^2}k^2\right)^{1/2}\S^{d-2}$ if $|k|\leq \tfrac{2}{\pi }s$.
Moreover, the symmetry  \eqref{eq:system} implies that  $e^{i\tfrac{\pi }{2}k}\mu_k+e^{-i\tfrac{\pi }{2}k}\mu_{-k}=0$,
that is $\mu_{-k}=(-1)^{k-1}\mu_k$. In particular, if
$s>\dst\tfrac{\pi }{2}$, then  $\mu=\delta_0\otimes\mu_0$ and thus
$\mu =0$. %  This shows that, when $s>\dst\tfrac{\pi }{2}$, $\mu=0$.

On the other hand, if $s\leq\dst\tfrac{\pi }{2}$, we take the
$\mu_k$'s to be arbitrary  measures supported on 
$\dst\left(1-\tfrac{\pi^2 }{4s^2}k^2\right)^{1/2}\S^{d-2}$ for  $0<k\leq \tfrac{2}{\pi }s$ and then define
$\mu_{-k}=(-1)^{k-1}\mu_k$ for those $k$'s. The measure $\mu=\sum_{0<|k|\leq\tfrac{2}{\pi }s}\delta_{\tfrac{\pi }{2s}k}\otimes\mu_k$
satisfies $\widehat{\mu}=0$ on $H_{u,-s}\cup H_{u,s}$. Thus $(\S^{d-1}, H_{u,-s}\cup H_{u,s})$ is not a \HUP . 

Note that in this case,
$\widehat{\mu}(\xi_1,\bar\xi)=\sum_{0<|k|\leq\tfrac{2}{\pi
  }s}e^{-\tfrac{i \pi }{2s}k\xi_1}\widehat{\mu_k}(\bar\xi)$ 
is $4\pi s$ periodic in the first variable. Consequently, after a
translation,  if $\mu\in\mm(\S^{d-1})$ and  
$\widehat{\mu}= 0$ on   $H_{0,s_1}\cup H_{0,s_2}$,  then
$\widehat{\mu}$ is $2(s_2-s_1)\pi$ periodic in the first
variable $\xi _1$. Thus,  
if $\widehat{\mu}=0$ on $H_{0,s_3}$ as well, then $\widehat{\mu}$ is
also $2(s_3-s_1)\pi$ periodic in $\xi _1$. 
In particular, for every $k,\ell\in\Z$ we have
$\widehat{\mu}(s_1+2k(s_2-s_1)\pi+2\ell
(s_3-s_1)\pi,\bar\xi)=\widehat{\mu}(s_1,\bar\xi)=0$. 
If $s_1,s_2,s_3$ are linearly independent over $\Q$ then the set
$\{s_1+2k(s_2-s_1)\pi+2\ell (s_3-s_1)\pi : k,\ell \in \Z \}$ is dense
in $\R$ 
thus $\widehat{\mu}=0$ on a dense set in $\R^d$. As $\widehat{\mu}$ is
continuous, $\widehat{\mu}=0$ on $\R^d$ and thus $\mu=0$. 
\end{proof}

Let us now turn to $N$  hyperplanes $H_1,\ldots,H_N$ that intersect in a common point. Without loss of generality,
we will assume that $H_1,\ldots,H_N$ intersect in $0$ so that
$H_k=H_{u_k,0}$ for some $u_k \in \R ^d, |u_k|=1$. 

\begin{theorem} \label{th:cox}
Let $H_1,\ldots,H_N$ be hyperplanes in $\R^d$ and $\rr _1,\ldots,\rr _N$ be the corresponding orthogonal reflections.
Then $(S^{d-1},H_1\cup \cdots\cup H_N)$ is a Heisenberg uniqueness pair if and only if
the Coxeter group generated by $\rr _1,\ldots,\rr _N$ is infinite.
\end{theorem}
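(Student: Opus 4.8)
The plan is to convert the vanishing of $\widehat\mu$ on $\bigcup_k H_k$ into a single equivariance property of $\mu$ under the group $W=\langle\rr_1,\dots,\rr_N\rangle$, and then to treat the two implications separately. Here $Q(x)=|x|^2$, $B(x,y)=\scal{x,y}$, $v=0$, and every hyperplane $H_k=H_{u_k,0}$ has non-isotropic normal since $Q(u_k)=1$. Thus the equivalence (i) $\Leftrightarrow$ (iii) of Lemma~\ref{lem:fund} (with $s=0$, where $\nu_{u,0}=\mu$) gives that $\widehat\mu=0$ on $\bigcup_{k=1}^N H_k$ exactly when $\rr_k\,{}_*\mu=-\mu$ for each $k$. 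Since push-forward is linear and multiplicative under composition, induction on word length yields $w\,{}_*\mu=\det(w)\,\mu$ for every $w\in W$. Writing $\eps(w)=\det(w)$ for the sign character of $W$ (a homomorphism, so this is consistent), the hypothesis $\widehat\mu=0$ on $\bigcup_k H_k$ is therefore \emph{equivalent} to $w\,{}_*\mu=\eps(w)\,\mu$ for all $w\in W$.

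First I would dispose of the case $W$ finite by exhibiting a nonzero admissible measure. A finite group of orthogonal maps has only finitely many reflecting hyperplanes, so I can choose a regular $x_0\in\S^{d-1}$ lying on none of them; then $x_0$ has trivial stabilizer and the orbit $Wx_0$ has $|W|$ distinct points. Setting
\begin{equation*}
\mu=\sum_{w\in W}\eps(w)\,\delta_{wx_0}\in\mm(\S^{d-1}),
\end{equation*}
a reindexing of the sum shows $w'\,{}_*\mu=\eps(w')\,\mu$ for all $w'\in W$, while $\mu\neq0$ because the $wx_0$ are distinct and the weights are $\pm1$. By the equivalence of the previous paragraph, $\widehat\mu=0$ on $\bigcup_k H_k$, so the pair is not a \HUP.

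For the converse I assume $W$ infinite and test $\mu$ against spherical harmonics. Let $\hh_n$ be the finite-dimensional space of degree-$n$ spherical harmonics and set $\ell(Y)=\int_{\S^{d-1}}Y\,\mbox{d}\mu$. The identity $w\,{}_*\mu=\eps(w)\mu$ gives $\ell(Y\circ w)=\eps(w)\,\ell(Y)$. Representing $\ell$ through the $L^2(\S^{d-1})$ inner product as $\ell(\cdot)=\scal{\,\cdot\,,Y_0}$ and using that $W$ acts on $\hh_n$ by isometries, this relation forces $Y_0\circ w=\eps(w)\,Y_0$ for all $w\in W$. Hence, whenever $\ell\neq0$, there is a nonzero \emph{alternating} harmonic polynomial $Y_0$ of degree $n$: for each reflection $\rr\in W$ with hyperplane $v^\perp$ we get $Y_0\circ\rr=-Y_0$, so $Y_0$ vanishes on $v^\perp$ and is divisible by $\scal{\cdot,v}$. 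Consequently $Y_0$ is divisible by the product of the linear forms of all distinct reflecting hyperplanes of $W$, which bounds their number by $\deg Y_0=n$.

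The crux -- and the place where a genuine theorem about Coxeter groups enters -- is that an infinite Coxeter group has infinitely many reflections, and hence infinitely many distinct reflecting hyperplanes. Granting this, the degree bound of the previous paragraph fails for every $n$, so no $\hh_n$ carries a nonzero alternating harmonic and $\ell\equiv0$ on each $\hh_n$; equivalently $\int Y\,\mbox{d}\mu=0$ for every spherical harmonic $Y$. As $\bigcup_n\hh_n$ is dense in $C(\S^{d-1})$, this yields $\int f\,\mbox{d}\mu=0$ for all continuous $f$, so $\mu=0$ and the pair is a \HUP. I expect the two delicate points to be the isometry/Riesz step producing the alternating harmonic (which relies on unitarity of the $O(d)$-action on $\hh_n$, not on finiteness of $W$) and the citation of the Coxeter-theoretic input that infiniteness is equivalent to the existence of infinitely many reflecting hyperplanes.
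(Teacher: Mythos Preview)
Your proof is correct and differs from the paper's in both directions. For the finite case, the paper extends a function on a Weyl chamber alternately across all chambers to build an absolutely continuous counterexample, whereas you place an atomic measure on a regular orbit; yours is simpler, but the paper's construction shows the stronger statement that even $\aa\cc(\S^{d-1})$-uniqueness fails. For the infinite case, the paper argues dynamically: it invokes \cite[Lemma~4.9]{HLR} to find an infinite dihedral subgroup of $W$, uses the resulting irrational rotation to sweep one hyperplane through a dense family on which $\widehat\mu$ vanishes, and concludes by continuity. Your route via spherical harmonics and divisibility by linear forms is more algebraic; both arguments rest on an external Coxeter-theoretic input, and in fact yours (infinitely many reflecting hyperplanes) follows from the paper's (existence of an infinite dihedral subgroup), though it can also be proved directly by noting that if $W$ had only finitely many reflections then $W/Z(W)$ would embed in a finite symmetric group while $Z(W)\subset O(d)$ is a finite elementary abelian $2$-group.
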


\begin{proof}
  Let $\rr _k$ be the reflection
with respect to $H_k$.    If  $\mu \in M (\S^{d-1}) $ and
$\widehat{\mu } $ vanishes on $H_k$, 
then  by Lemma \ref{lem:fund} we have $\rr _k \, _*\,  \mu = - \mu $. %it is invariant under $R_k$.
Consequently, its Fourier transform satisfies $|\hat{\mu }(R_k \xi )|
= |\hat{\mu }(\xi )|$ for all $\xi \in \R ^d$. %  is invariant under $R_k^*=R_k$. 
% Equivalently, its Fourier transform is invariant under $R_k^*=R_k$.

Let $G$ be the group generated by the reflections
$\{\rr _1,\ldots,\rr _k\}$. This is a {\em Coxeter} group. The modulus
$|\hat{\mu }|$ is  therefore invariant under the  Coxeter group
$G=\langle \rr _1,\ldots,\rr _k\rangle $. 
% generated by the reflections $R_1,\ldots,R_k$. 
Let  $u,u' \in \S ^{d-1}$ and $x\in H_u$. Since $\langle \rr _{u'}x,
\rr _{u'}u\rangle = \langle x,u\rangle$, we have $\rr _{u'} H_u =
H_{\rr _{u'}u}$. Consequently, if $g = \rr _{j_1}\rr _{j_2} \dots \rr _{j_n} \in
G$ and $\hat{\mu } = 0$ on $H_u$, then $\hat{\mu }$ vanishes on
$H_{gu}$. 
% Note that if $R,R'\in G$ are two reflections with respect to hyperplanes $H,H'$
% then $\widehat{\mu}=0$ on $H,H'$. Indeed, $\widehat{\mu}=0$ on $H_1,\ldots,H_N$, $|\widehat{\mu}|$ is invariant under
% $R_1,\ldots,R_N$ thus $\widehat{\mu}=0$ on every hyperplane obtained by a finite number of reflexions $R_1,\ldots,R_N$,
% in particular on $H$ and $H'$. 
Furthermore note that the composition $\rr _{u'}\rr _u$ of two reflections   is a
rotation in the plane spanned by $u$ and $u'$  with angle
twice the angle between $u$ and $u'$. 

We distinguish  two cases: 

(i) Either $G$ is infinite. In this case \cite[Lemma 4.9]{HLR}, 
$G$ contains a subgroup generated by two reflections $\rr ,\rr '$ that is already infinite. 
Write $H_u,H_{u'}$
for the corresponding hyperplanes and  note that $u'\notin H_u$. But then the rotation
$\rr \rr '$ has an angle that is an  irrational multiple of  $\pi$,
so that the orbit of $H_u$ under $\rr \rr  '$
is dense in $\R^d$. As $\widehat{\mu}=0$ on $H_u$ and
$|\widehat{\mu}|$ is invariant under $\rr \rr '$,
it follows that $\widehat{\mu}=0$ on a dense set. As $\widehat{\mu}$ is continuous, $\widehat{\mu}=0$ on
$\R^d$ thus $\mu=0$. Consequently, $(S, H_1 \cup \dots \cup  H_N)$ is a \HUP . 

(ii)  Or $G$ is finite. But then, there exists a subset $W$ of $\R^d$ (a Weyl chamber) such that
$\{gW : g\in G\}$ is a covering of $\R^d$ and every $x\in\R^d$
determines a unique  $g$ such that $x\in gW$.
Now take any continuous  function $\ffi$  on $\S^{d-1}\cap W$ and extend $\ffi$ to $\S^{d-1}$
by  the following rule: if $x\in\S^{d-1}$ then there exists a unique $g\in G$ such that $gx\in W$.
Writing $g$ as  $g=\prod _{j=1}^M \rr _{k_j}$ with
$k_1,\ldots,k_M\in\{1,\ldots,N\}$, we then set  $\ffi(x)=(-1)^M\ffi(gx)$.
The corresponding   measure $\mu(x)=\ffi(x)\,\mbox{d}\sigma(x)$ is well
defined, non-zero, absolutely continuous with respect to surface
measure,  and  satisfies $\rr _k\,_*\mu=-\mu$ by construction. By Lemma~\ref{lem:fund}
$\widehat{\mu}=0$ on $H_1 \cup \dots  \cup H_N$. Thus $(S, H_1 \cup
\dots \cup  H_N)$ 
fails to be  a \HUP . 
\end{proof}

\begin{remark}
When $d=2$ we recover the theorem of Lev and Sj\"olin: let
$\ell_1,\ell_2$ be  two lines that intersect at $0$. 
Then $(\S^1,\ell_1\cup\ell_2)$ is a \HUP ,  if and only if the angle between $\ell_1$ and $\ell_2$ is not in $\Q\pi$.

However, when $d\geq 3$, there exists sets of 3 hyperplanes such that  the angle between any two of them is
in $\Q\pi$ but such that the corresponding Coxeter group is infinite,
{\it see} \cite{RS} for a complete description of the $3$-dimensional case.
\end{remark}

\section*{Acknowledgments}
The authors kindly acknowledge financial support from the French ANR programs ANR
2011 BS01 007 01 (GeMeCod), ANR-12-BS01-0001 (Aventures).
This study has been carried out with financial support from the French State, managed
by the French National Research Agency (ANR) in the frame of the ”Investments for
the future” Programme IdEx Bordeaux - CPU (ANR-10-IDEX-03-02).
K.\ G.\ was  supported in part by the  project P26273 - N25  of the
Austrian Science Fund (FWF).
Both authors kindly acknowledge the support of the Austrian-French AMADEUS project 35598VB - ChargeDisq.

\end{document}